\definecolor{yel}{rgb}{1,.9,0}
\definecolor{red-}{rgb}{1.0,0.0,0.0}
\definecolor{green-}{rgb}{0.0,0.7,0.0}
\definecolor{brown-}{rgb}{0.9,0.6,0.0}
\newtheorem{teo}{Theorem}
\newtheorem{prop}{Proposition}
\newtheorem{cor}{Corollary}
\newtheorem{lem}{Lemma}
\theoremstyle{definition}
\newtheorem{defi}{Definition}
\newtheorem{obs}{Remark}
\newtheorem{ex}{Example}
\newtheorem{prog}{Program}
\def\Q{\mathbb{Q}}
\def\F{\mathbb{F}}
\def\P{\mathbb{P}}
\begin{document}

\title{Lower and upper bounds for some generalized arcs}

\author{Alexis E. Almendras Valdebenito and Andrea Luigi Tironi}

\date{\today}

\address{{\small Departamento de Ciencias B\'asicas,
Universidad de Concepci\'on,
Los \'Angeles, Chile}}
\email{alexisalmendras@udec.cl}

\address{{\small Departamento de Matem\'atica,
Universidad de Concepci\'on,
Casilla 160-C,
Concepci\'on, Chile}}
\email{atironi@udec.cl}

\subjclass[2010]{Primary: 12Y05, 16Z05; Secondary: 94B05, 94B35. Key words and phrases: finite fields,
constacyclic codes, dual codes, skew polynomial rings, semi-linear maps.}

\maketitle

\begin{abstract}
Let $\mathbb{F}_q$ be a field with $q$ elements. In this note, we study some generalized arcs, 
that is, sets of $\mathbb{F}_q$-points in the projective plane $\mathbb{P}^2(\mathbb{F}_q)$ 
such that no six of them are on a conic. First, we consider the geometric configurations of such generalized arcs
for small values of $q$ and then we give some upper and lower bounds for the cardinality of complete generalized arcs, 
i.e. generalized arcs which are not contained in a bigger~one.
\end{abstract}

\section*{Introduction}

Let $\mathbb{F}_q$ be a field with $q$ elements, where $q=p^r$ for some prime $p$ and $r\in\mathbb{Z}_{\geq 1}$.
Consider the finite projective plane $\P^2(\F_q)$. A $k$-arc is a set $\mathcal{K}$ of $k$ $\mathbb{F}_q$-points (or, simply, \textit{points}) of $\P^2(\F_q)$ such that no three of them lie on a $\mathbb{F}_q$-line (or, simply, \textit{line}) of $\P^2(\F_q)$.
The literature on $k$-arcs is very vast and many generalizations inherent to this concept have been considered by various authors, even in higher dimensions. In particular, recalling that a $k$-arc is complete if it is not contained in a $(k+1)$-arc, many authors focused their attention to the study of complete $k$-arcs and on some upper and lower bounds of their cardinality, obtaining interesting algebraic and geometric results, also in connection with some linear codes of special type. For a survey about all these results we refer, for instance, to \cite{HT} and the references therein.

In this note, we extend the concept of a $k$-arc along another direction. More precisely, we consider principally a
\textit{generalized $k$-arc}, that is, a set of $k$ points of $\P^2(\F_q)$ such that no six of them lie on a $\mathbb{F}_q$-conic (or, simply, \textit{conic}) of $\P^2(\F_q)$. Let us observe here that the conics we consider in this definition could be also reducible in two lines. In particular,  
a generalized $k$-arc which is also a $k$-arc is known simply as a \textit{Veronesian $k$-arc}. In line with the classical results about $k$-arcs, we start to study here some geometric properties of generalized $k$-arcs for small values of $q$ (see, e.g., Table \ref{arco:generalizado}) and we give some upper and lower bounds about the cardinality of complete generalized and Veronesian $k$-arcs for sufficiently bigger values of $q$ (see, e.g., Theorems \ref{teo322}, \ref{teo2}, \ref{t:2}). 

\medskip

The paper is organized as follows. After some basic notions and remarks in Section \ref{Sec 3.1}, we study in 
Section \ref{Sec 3.2} the cardinality and the geometry of maximal complete generalized $k$-arcs in $\P^2(\F_q)$ for $q\leq 9$ (Table \ref{arco:generalizado}). Subsequently, for higher values of $q$, we show some upper bounds for
generalized $k$-arcs, depending on the parity of $q$ (Theorem \ref{teo322} and Corollary \ref{corol}). In Section \ref{Sec 3.3}, we prove another lower bound for complete $k$-arcs in $\P^2(\F_q)$ (Proposition \ref{thm}) and we give principally a lower bound for complete generalized $k$-arcs $\mathcal{K}_g$, depending on the fact that $\mathcal{K}_g$ is 
complete as a Veronesian $k$-arc, or not (Theorems \ref{teo2} and \ref{t:2}). Finally, in the Appendix, we report the main Magma \cite{Magma} programs used to build all the tables and the corresponding examples.

\medskip

\noindent {\bf Acknowledgements}. Both authors would like to thank
the referee for reading the paper carefully and for many useful suggestions
which 
improved the presentation of some results of the first draft.
During the preparation of this paper in the framework of the Project Anillo ACT 1415
PIA CONICYT, the authors were partially supported by Proyectos 
VRID N. 214.013.039-1.OIN and N. 219.015.023-INV, and the first author was also partially supported by CONICYT-PCHA/Mag\'ister
Nacional a\~no 2013 - Folio: 221320380.

\section{Basic definitions and background material}\label{Sec 3.1}

First of all, let us recall here some basic classical definitions. A $(k,m)_n$-arc is a set of $k$ points of $\P^n(\F_q)$ such that no  $m+1$ of them lie on a hyperplane. A $(k,m)_n$-arc is complete if it is not properly contained in a $(h,m)_n$-arc, for some $h>k$. In particular, a $(k,2)_2$-arc is simply a $k$-arc in $\P^2(\F_q)$ (see, e.g., \cite{SEGRE1bis}, \cite{SEGRE2}, \cite{HIRSCHFELD0}). 
Moreover, the size of the largest complete $k$-arc in $\P^2(\F_q)$ will be denoted by $m(2,q)$, the size of the second largest complete $k$-arc 
in $\P^2(\F_q)$ by $m'(2,q)$, and the size of the smallest one by $t(2,q)$. 

\smallskip

Now, let us introduce here the two main definitions of this note.

\begin{defi}\label{def311}
A \textit{generalized $k$-arc} (or, simply, a $k_g$-arc) is a set of $k$ points of $\P^2(\F_q)$ such that not six of them are on a (possibly reducible) conic. A $k_g$-arc is said to be \textit{complete} if it is not contained in a $(k+1)_g$-arc. Moreover, the size of the largest complete $k_g$-arc is denoted  by $m_g(2,q)$, while the size of the smallest one will be denoted by $t_g(2,q)$. Finally, a generalized $k$-arc is said to be \textit{maximal} if $k=m_g(2,q)$ and 
\textit{minimal} if $k=t_g(2,q)$.
\end{defi}

\begin{defi}\label{def2}
A \textit{Veronesian $k$-arc} (or, simply, a $k_v$-arc) is a $k$-arc in $\P^2(\F_q)$ such that not six of its points 
lie on a (necessarily irreducible)
conic. A $k_v$-arc is said to be \textit{complete} if it is not contained in a $(k+1)_v$-arc. Moreover, the size of the largest complete $k_v$-arc is denoted  by $m_v(2,q)$, while the size of the smallest one will be denoted by $t_v(2,q)$. 
Finally, a generalized $k$-arc is said to be \textit{maximal} if $k=m_v(2,q)$ and 
\textit{minimal} if $k=t_v(2,q)$.
\end{defi}

\begin{obs}\label{obs313}
From Definitions \ref{def311} and \ref{def2}, one can deduce that a $k_g$-arc is at most a $(k,3)_2$-arc and that a Veronesian $k$-arc is a $k_g$-arc with the additional property of being a $(k,2)_2$-arc.
Moreover, since a Veronesian arc is a special case of a generalized arc, we have
$$t_v(2,q)\leq t_g(2,q)\quad \mathrm{and}\quad m_v(2,q)\leq m_g(2,q) \ .$$
\end{obs}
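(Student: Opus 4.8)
The plan is to verify the two structural assertions directly and then deduce the two inequalities. First I would show that a $k_g$-arc $\mathcal{K}$ with $k\ge 6$ meets every line in at most three points, so that it is a $(k,3)_2$-arc: if a line $\ell$ contained four points $P_1,\dots,P_4$ of $\mathcal{K}$, then choosing any two further points $P_5,P_6\in\mathcal{K}$, the six points $P_1,\dots,P_6$ would lie on the reducible conic $\ell\cup\overline{P_5P_6}$, which is admissible in Definition \ref{def311}, contradicting that $\mathcal{K}$ is a $k_g$-arc; for $k\le 5$ the condition on six points is vacuous. Next, for the description of Veronesian arcs, I would observe that a reducible conic is a union $\ell_1\cup\ell_2$ of at most two (possibly coincident) lines, so by the pigeonhole principle any six points lying on it include three collinear ones; hence a $k$-arc (i.e.\ a $(k,2)_2$-arc) with no six points on an irreducible conic automatically has no six points on any conic, and is therefore a $k_g$-arc, while conversely a $k_g$-arc that is also a $(k,2)_2$-arc satisfies exactly the conditions of Definition \ref{def2}.

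For the inequalities, I would first note that a $k_g$-arc (resp.\ a $k_v$-arc) of maximal size cannot be enlarged and is thus automatically complete, so that $m_g(2,q)$ and $m_v(2,q)$ are simply the largest sizes of a $k_g$-arc and of a $k_v$-arc, respectively; since every $k_v$-arc is a $k_g$-arc by the previous step, taking maxima yields $m_v(2,q)\le m_g(2,q)$. For $t_v(2,q)\le t_g(2,q)$ the idea is to start from a complete $k_g$-arc $\mathcal{K}$ of minimal size $t_g(2,q)$. If $\mathcal{K}$ has no three collinear points it is a Veronesian arc, and it is in fact complete as such, since a Veronesian $(k+1)$-arc containing $\mathcal{K}$ would be a $(k+1)_g$-arc containing $\mathcal{K}$, contradicting the completeness of $\mathcal{K}$; hence $t_v(2,q)\le|\mathcal{K}|=t_g(2,q)$. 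If instead $\mathcal{K}$ contains a collinear triple, I would pass to a maximal Veronesian sub-arc $\mathcal{S}\subseteq\mathcal{K}$ and aim to show that $\mathcal{S}$ is already complete as a Veronesian arc, exploiting the completeness of $\mathcal{K}$ together with the fact that every point of $\mathcal{K}\setminus\mathcal{S}$ lies on a secant of $\mathcal{S}$.

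The routine ingredients here are the pigeonhole argument and the observation that reducible conics are unions of two lines. The step I expect to be the main obstacle is the last one: transferring completeness from a minimal complete generalized arc containing a collinear triple to a Veronesian sub-arc. Completeness as a Veronesian arc and completeness as a generalized arc are genuinely different notions (adjoining a point may create a collinear triple without creating six points on a conic), so one must rule out an external point $P$ with $\mathcal{S}\cup\{P\}$ still a Veronesian arc; handling this forces one to relate the conics witnessing the completeness of $\mathcal{K}$ to the secant structure of $\mathcal{S}$ inside $\mathcal{K}$, and it is here that either a more careful choice of $\mathcal{S}$ or a direct argument bypassing the sub-arc may be required.
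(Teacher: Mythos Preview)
Your treatment of the structural claims and of the inequality $m_v(2,q)\le m_g(2,q)$ is correct and in fact more detailed than the paper's: the paper states Remark~\ref{obs313} with no argument beyond the sentence ``since a Veronesian arc is a special case of a generalized arc'', and you have supplied the missing verifications (the four-points-on-a-line argument, the pigeonhole step for reducible conics, and the observation that a maximal arc is automatically complete).

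Where you hesitate, you are right to hesitate. The inequality $t_v(2,q)\le t_g(2,q)$ does \emph{not} follow from the inclusion ``Veronesian $\subset$ generalized'' in the way the paper suggests: for minima one would need the reverse inclusion on the classes of \emph{complete} objects, and neither inclusion holds. A complete Veronesian arc need not be complete as a generalized arc (one may be able to adjoin a point creating a collinear triple but no six points on a conic), and a minimal complete generalized arc is typically not a Veronesian arc at all --- the paper itself observes, when computing $t_g(2,4)$, that the minimal complete $k_g$-arc must contain a collinear triple. Your proposed fallback, passing to a maximal Veronesian sub-arc $\mathcal{S}$ of a minimal complete $k_g$-arc $\mathcal{K}$, does not close the gap either: for instance, for $q=5$ the paper's minimal complete generalized $7$-arc has a collinear triple, and removing it leaves a $4$-arc whose secants do not cover $\mathbb{P}^2(\mathbb{F}_5)$, so that $4$-arc is not complete as a Veronesian arc.

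In short, the paper offers no proof of $t_v(2,q)\le t_g(2,q)$ beyond the one-line heuristic, and you have correctly located a genuine gap rather than a routine step. The inequality is consistent with all the tabulated values in the paper, but establishing it in general would require an argument that neither you nor the paper supplies.
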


Denote by $|W|$ the cardinality of a set $W\subseteq\P^2(\F_q)$. The following simple technical result will be useful in the sequel. 

\begin{lem}\label{prop321}
Let $\mathcal{K}_g\subseteq\P^2(\F_q)$ be a generalized $k$-arc. If $l_1$ is a line in $\P^2(\F_q)$ such that $|\mathcal{K}_g\cap l_1|=3$, then for every line $l$ in $\P^2(\F_q)$ we have
\[ | (\mathcal{K}_g\setminus l_1)\cap l | \leq 2,\]
i.e. $(\mathcal{K}_g\setminus l_1)$ is a (Veronesian) $k$-arc in $\P^2(\F_q)$. In particular, a generalized $k$-arc is at most the disjoint union of a (Veronesian) $(k-3)$-arc and $3$ collinear points.
\end{lem}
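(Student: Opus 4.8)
The statement is really the contrapositive of a counting argument about conics. Suppose, for contradiction, that there is a line $l$ with $|(\mathcal{K}_g\setminus l_1)\cap l|\geq 3$; pick three points $P_1,P_2,P_3$ of $\mathcal{K}_g$ lying on $l$ and not on $l_1$. Together with the three points $Q_1,Q_2,Q_3$ of $\mathcal{K}_g\cap l_1$, these are six distinct points of $\mathcal{K}_g$ (they are distinct because the $P_i$ are off $l_1$ and the $Q_j$ are on $l_1$). The plan is to exhibit a (reducible) conic passing through all six of them, contradicting the defining property of a generalized $k$-arc.

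The natural candidate is the reducible conic $C=l\cup l_1$, whose defining equation is the product of the two linear forms cutting out $l$ and $l_1$. Since $P_1,P_2,P_3\in l\subseteq C$ and $Q_1,Q_2,Q_3\in l_1\subseteq C$, all six points lie on $C$. One should note that $C$ is a genuine conic in the sense of Definition \ref{def311}, i.e. a (possibly reducible) plane conic, so this is exactly the kind of curve the definition of a generalized arc forbids from containing six of the arc's points. Hence the assumption $|(\mathcal{K}_g\setminus l_1)\cap l|\geq 3$ is untenable, which proves the displayed inequality. Strictly speaking one should also treat the degenerate case $l=l_1$ separately, but then $(\mathcal{K}_g\setminus l_1)\cap l=\emptyset$ and the bound is trivial, so we may assume $l\neq l_1$.

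Once the inequality $|(\mathcal{K}_g\setminus l_1)\cap l|\leq 2$ is established for every line $l$, the set $\mathcal{K}_g\setminus l_1$ satisfies by definition the property that no three of its points are collinear, i.e. it is a $k'$-arc in $\P^2(\F_q)$ with $k'=|\mathcal{K}_g\setminus l_1|$; and since it is a subset of the generalized arc $\mathcal{K}_g$, no six of its points lie on a conic either, so it is in fact a Veronesian $k'$-arc. For the final assertion, observe that if additionally $|\mathcal{K}_g\cap l_1|=3$ then $k'=k-3$, while the three points in $\mathcal{K}_g\cap l_1$ are collinear; thus $\mathcal{K}_g$ decomposes as the disjoint union of the Veronesian $(k-3)$-arc $\mathcal{K}_g\setminus l_1$ and the three collinear points on $l_1$.

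The only mild subtlety — and the one place to be careful — is confirming that $l\cup l_1$ really qualifies as a conic under the paper's conventions; but this is explicitly allowed, since the excerpt stresses that the conics in Definition \ref{def311} "could be also reducible in two lines." Everything else is elementary incidence bookkeeping, so there is no serious obstacle here: the lemma is essentially the observation that three collinear points plus three more collinear points always lie on a reducible conic.
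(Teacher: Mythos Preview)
Your proof is correct and follows essentially the same approach as the paper: assume for contradiction that some line $l$ meets $\mathcal{K}_g\setminus l_1$ in at least three points, and observe that the reducible conic $l\cup l_1$ then contains six distinct points of $\mathcal{K}_g$, contradicting Definition~\ref{def311}. The paper's version is slightly terser (it first invokes Remark~\ref{obs313} to get $|\mathcal{K}_g\cap l|\leq 3$ and then treats only the equality case), but the core idea is identical, and your added remarks on the degenerate case $l=l_1$ and on why $\mathcal{K}_g\setminus l_1$ is Veronesian are welcome clarifications.
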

\begin{proof}
By Remark \ref{obs313}, we have $|\mathcal{K}_g\cap l|\leq 3$ for every line $l$ in $\P^2(\F_q)$. By contradiction, suppose there exists a line $l_2$ in $\P^2(\F_q)$ such that $| (\mathcal{K}_g\setminus l_1)\cap l_2 |=3$. Then, we get $(l_1\cap\mathcal{K}_g)\cap(l_2\cap\mathcal{K}_g)=\varnothing$. Hence, the conic $\Gamma:=l_1\cup l_2$ is such that $|\mathcal{K}_g\cap \Gamma|=6$, 
but this is impossible because
$\mathcal{K}_g$ is a generalized $k$-arc.
\end{proof}

\section{Some upper bounds for $m_g(2,q)$}\label{Sec 3.2}

Before to study the geometry of generalized $k$-arcs for small values of $q$ and some upper bounds for $m_g(2,q)$
when $q$ is sufficiently large, let us give here the following result.

\begin{prop}\label{proposition}
We have $$m_g(2,q)\leq m(5,q)$$ for any $q\geq 2$, where $m(5,q)$ is the largest size of a $(k,5)_5$-arc.
\end{prop}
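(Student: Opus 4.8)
The plan is to exploit the Veronese embedding $\nu\colon\P^2(\F_q)\hookrightarrow\P^5(\F_q)$ of degree $2$, which sends a point $[x_0:x_1:x_2]$ to $[x_0^2:x_1^2:x_2^2:x_0x_1:x_0x_2:x_1x_2]$. The crucial classical fact is that a conic in $\P^2(\F_q)$, being the zero locus of a quadratic form $\sum a_{ij}x_ix_j$, pulls back from a hyperplane of $\P^5(\F_q)$: the hyperplane $\sum a_{ij}y_{ij}=0$ meets the Veronese surface $\nu(\P^2(\F_q))$ exactly in the image of that conic. Conversely, since $\nu$ is an embedding (it separates points; the six quadratic monomials span all quadratic forms), every hyperplane of $\P^5(\F_q)$ cuts out a (possibly reducible) conic on the surface. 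So there is an inclusion-reversing bijection, compatible with incidence, between conics in $\P^2(\F_q)$ and hyperplanes of $\P^5(\F_q)$ restricted to the Veronese surface.

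First I would set $\mathcal{K}'=\nu(\mathcal{K}_g)$ for a maximal generalized $k$-arc $\mathcal{K}_g$; since $\nu$ is injective, $|\mathcal{K}'|=k=m_g(2,q)$. Next I would verify that $\mathcal{K}'$ is a $(k,5)_5$-arc in $\P^5(\F_q)$, i.e. no six of its points lie on a hyperplane of $\P^5(\F_q)$. Indeed, if six points of $\mathcal{K}'$ lay on a hyperplane $H$, then by the correspondence above the six corresponding points of $\mathcal{K}_g$ would lie on the conic $\nu^{-1}(H\cap\nu(\P^2(\F_q)))$, contradicting the defining property of a generalized $k$-arc. Hence $k=|\mathcal{K}'|\le m(5,q)$, which is the desired bound.

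The one point needing a little care is the degenerate possibility that the six monomials defining a hyperplane section all vanish identically on the Veronese surface, or that a hyperplane contains the whole surface — but this cannot happen, precisely because the quadratic Veronese embedding is nondegenerate (the surface spans $\P^5(\F_q)$), so every hyperplane meets it in a proper subvariety, namely a conic. The main (only real) obstacle is therefore to state cleanly the equivalence ``six points of $\P^2$ on a conic $\iff$ their Veronese images on a hyperplane of $\P^5$''; once that is in place the proposition is immediate. I would phrase it as a short preliminary remark about the quadratic Veronese map and then give the two-line argument above.
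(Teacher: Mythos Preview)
Your proof is correct and follows essentially the same route as the paper: both use the quadratic Veronese embedding $\P^2(\F_q)\hookrightarrow\P^5(\F_q)$, observe that hyperplane sections of the Veronese surface correspond to conics, and conclude that the image of a generalized $k$-arc is a $(k,5)_5$-arc. The only differences are cosmetic (the ordering of the monomials and your extra remark on nondegeneracy, which the paper leaves implicit).
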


\begin{proof}
Consider the Veronese embedding
\begin{equation}\label{Ver}
\begin{aligned}
\nu_2 \colon \P^2(\F_q) &\longrightarrow \P^5(\F_q) \\
	[x_0:x_1:x_2]&\longmapsto [x_0^2 : x_0x_1 : x_0x_2 : x_1^2 : x_1x_2 : x_2^2]\ .\\
\end{aligned}
\end{equation}
Define $S:=\nu_2(\P^2(\F_q))\subset \P^5(\F_q)$ and denote by $[z_0:z_1:z_2:z_3:z_4:z_5]$ the general point of $\P^5(\F_q)$. If $H$ is a hyperplane
$\sum_{i=0}^5 \alpha_i z_i =0$ in $\P^5(\F_q)$, with $\alpha_i\in\F_q$, then $\nu_2^{-1}(H \cap S)$ is a conic in $\P^2(\F_q)$. If $\mathcal{K}_g$ is a generalized $k$-arc, then  $|\nu_2(\mathcal{K}_g)\cap H |\leq 5$. Therefore, the set $\nu_2(\mathcal{K}_g)$ is a $(k,5)_5$-arc and then $m_g(2,q)\leq m(5,q)$. 
\end{proof}

First of all, let us compute  the exact values of $m_g(2,q)$ for small  $q$'s, that is, for $q=2,3,4,5, 7$ and $8$.

\begin{prop}\label{prop325}
$m_g(2,2)=7$ $($cf. \em{Table \ref{arco:generalizado}}$)$.
\end{prop}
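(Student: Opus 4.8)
The plan is to show both inequalities $m_g(2,2)\le 7$ and $m_g(2,2)\ge 7$. For the upper bound, recall that $\P^2(\F_2)$ has exactly $7$ points, so trivially $m_g(2,2)\le 7$; there is nothing to prove here beyond quoting that the whole plane has $|\P^2(\F_2)|=2^2+2+1=7$ points. The real content is the lower bound, i.e. that the whole plane $\mathcal{K}_g=\P^2(\F_2)$ is itself a generalized $7$-arc, equivalently that no six points of $\P^2(\F_2)$ lie on a (possibly reducible) conic. Since this would place $\mathcal{K}_g$ inside no $(k+1)_g$-arc for trivial cardinality reasons, completeness is automatic once we know it is a $7_g$-arc, and maximality follows since $7$ is the largest possible value.

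So the key step is: \emph{verify that every conic in $\P^2(\F_2)$ contains at most five of the seven points}. First I would split into the two types of conics. A reducible conic is a union of two (not necessarily distinct) lines; a line in $\P^2(\F_2)$ has exactly $q+1=3$ points, so a union of two distinct lines has $3+3-1=5$ points, and a "double line" has $3$ points. Either way a reducible conic carries at most $5$ of the $7$ points, so it can never contain $6$. For irreducible conics, one can argue abstractly: an irreducible conic in $\P^2(\F_q)$ is a rational curve with exactly $q+1$ points, hence here $3$ points — far below $6$. Alternatively, and perhaps more in the spirit of a small-$q$ verification, I would simply enumerate: up to the action of $\mathrm{PGL}_3(\F_2)$ there are very few conics, and a direct count (or the Magma computation referenced in the Appendix, cf. Table~\ref{arco:generalizado}) confirms that no conic meets $\P^2(\F_2)$ in six points. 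Hence $\P^2(\F_2)$ is a $7_g$-arc.

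Putting the pieces together: $\mathcal{K}_g=\P^2(\F_2)$ is a generalized $7$-arc, it is complete (indeed maximal) because $|\P^2(\F_2)|=7$ leaves no room for an eighth point, and no generalized arc can exceed this cardinality; therefore $m_g(2,2)=7$. The main (and only) obstacle is the conic-by-conic check of the previous paragraph, which is genuinely routine given that $\F_2$ is so small: the bound $|\text{line}|=3$ forces every reducible conic to have at most $5$ points, and irreducible conics have only $3$ points, so in fact no conic in $\P^2(\F_2)$ even comes close to containing six points.
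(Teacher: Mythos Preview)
Your proof is correct and follows essentially the same approach as the paper: the paper simply notes that any conic in $\P^2(\F_2)$ has at most $5$ points, so every subset of $\P^2(\F_2)$ is a generalized arc and hence $m_g(2,2)=|\P^2(\F_2)|=7$. Your version supplies the extra detail (splitting into reducible and irreducible conics) that justifies this point-count, but the underlying argument is the same.
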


\begin{proof}
Note that any conic in $\P^2 (\F_2)$ has at most $5$ points. So, any subset of $\P^2 (\F_2)$ is  a generalized arc. Therefore, $m_g(2,2)=|\P^2(\F_2)|=7$.
\end{proof}

\begin{ex}\label{ex2}
Let $q=3$ and let $\mathcal{K}=\{p_1,p_2,p_3,p_4\}$ be a complete $4$-arc in $\P^2(\F_3)$. Denote by $l_{ij}$ the line passing through the distinct points $p_i$ and $p_j$. Let $q_1$ be the intersection of $l_{12}$ and $l_{34}$, and let $q_2$ be the intersection of $l_{13}$ and $l_{24}$. Consider the point  $q_3$ obtained by intersecting the line  passing through $q_1$ and $q_2$ with $l_{14}$. So, by construction, the set $\mathcal{K}_g=\{ p_1,p_2,p_3,p_4,q_1,q_2,q_3\}$ is a generalized $7$-arc. For instance, consider the set $\mathcal{K}_g$ given by the following seven points (see Figure \ref{arcoge}):
\begin{multicols}{4}
\begin{itemize}
\item $[0:1:2]$
\item $[1:1:1]$
\item $[1:1:2]$
\item $[1:0:1]$
\item $[1:2:0]$
\item $[1:1:0]$
\item $[0:1:0]$
\end{itemize}
\end{multicols}

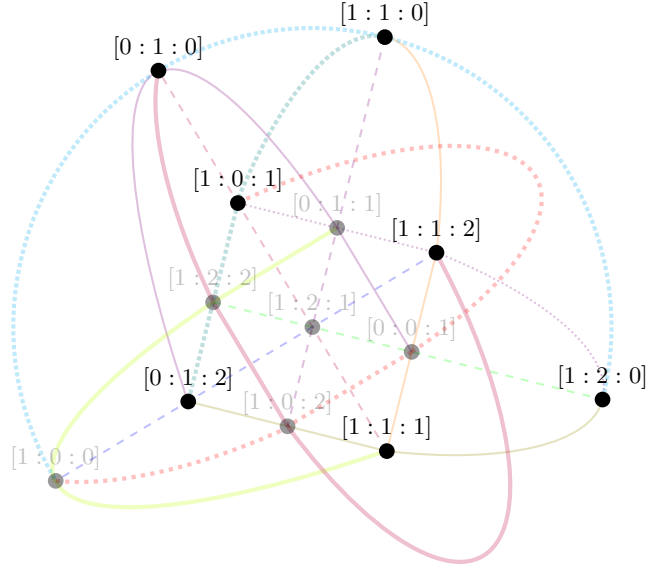
\begin{figure}[!h]
\begin{center}
\begin{tikzpicture}[scale=0.33]
\path (0,0) coordinate (A);
\path (8,-2) coordinate (B);
\path (10,6) coordinate (C);
\path (2,8) coordinate (D);
\path (1,4) coordinate (E);
\path (6,7) coordinate (F);
\path (9,2) coordinate (G);
\path (4,-1) coordinate (H);
\path (-5.34,-3.2) coordinate (I);
\path (-1.2,13.34) coordinate (J);
\path (7.92,14.69) coordinate (K);
\path (16.69,0.08) coordinate (L);
\path (5,3) coordinate (M);
\draw[opacity=0.25,thick,olive] (A)--(B);
\draw[opacity=0.25,thick, orange] (B)--(C);
\draw[opacity=0.25,thick,violet,densely dotted] (C)--(D);
\draw[opacity=0.25,ultra thick,teal,densely dotted] (D)--(A);
\draw[opacity=0.25,thick,violet] (F)--(G);
\draw[opacity=0.25,red, dotted,ultra  thick] (G)--(H);
\draw[opacity=0.25,ultra thick,purple] (H)--(E);
\draw[opacity=0.25,ultra thick,lime] (E)--(F);
\draw[opacity=0.25,dashed,green,thick] (E)--(G)--(L);
\draw[opacity=0.25,dashed,blue,thick] (C)--(I);
\draw[opacity=0.25,dashed,purple,thick] (B)--(J);
\draw[opacity=0.25,dashed,violet,thick] (K)--(H);
\draw[opacity=0.25,rotate=-14.04,thick,olive] (B) arc [start angle=-70,end angle=0, x radius=12.05, y radius= 4.39];
\draw[opacity=0.25,rotate=-14.04,thick,violet,densely dotted] (L) arc [start angle=0,end angle=70, x radius=12.05, y radius= 4.39];
\draw[opacity=0.25,rotate=75.6,thick,orange] (C) arc [start angle=-70,end angle=0, x radius=12.05, y radius= 4.39];
\draw[opacity=0.25,rotate=75.6,ultra thick,teal,densely dotted] (K) arc [start angle=-0,end angle=70, x radius=12.05, y radius= 4.39];
\draw[opacity=0.25,rotate=114.4,thick,violet] (F) arc [start angle=-54,end angle=0, x radius=21.05, y radius= 4.83];
\draw[opacity=0.25,rotate=114.4,thick,violet] (J) arc [start angle=0,end angle=67, x radius=21.05, y radius= 4.83];
\draw[opacity=0.25,rotate=24.53,ultra thick,lime] (E) arc [start angle=124,end angle=180, x radius=19.97, y radius= 4.76];
\draw[opacity=0.25,rotate=24.53,ultra thick,lime] (I) arc [start angle=180,end angle=248, x radius=19.97, y radius= 4.76];
\draw[opacity=0.25,rotate=119.67,ultra thick,purple] (J) arc [start angle=22,end angle=80, x radius=12.21, y radius= 4.44];
\draw[opacity=0.25,rotate=119.67,ultra thick,purple] (H) arc [start angle=108,end angle=268, x radius=12.21, y radius= 4.44];
\draw[opacity=0.25,red, dotted,rotate=115.38,ultra thick] (I) arc [start angle=123,end angle=178, x radius=4.54, y radius= 11.47];
\draw[opacity=0.25,red, dotted,rotate=115.38,ultra thick] (G) arc [start angle=200,end angle=360, x radius=4.35, y radius= 11.47];
\draw[opacity=0.25,cyan, ultra thick,densely dotted] (L) arc [start angle=-14.04,end angle=210.96,radius=12.05];
\fill (A) circle [radius=9pt] node [above] {\scriptsize $[0:1:2]$}; 
\fill (B) circle [radius=9pt] node [above] {\scriptsize $[1:1:1]$};
\fill (C) circle [radius=9pt] node [above] {\scriptsize $[1:1:2]$};
\fill (D) circle [radius=9pt] node [above] {\scriptsize $[1:0:1]$} ;
\fill[opacity=0.4] (E) circle [radius=9pt] node [above,opacity=0.25] {\scriptsize $[1:2:2]$};
\fill[opacity=0.4] (F) circle [radius=9pt] node [above,opacity=0.25] {\scriptsize $[0:1:1]$};
\fill[opacity=0.4] (G) circle [radius=9pt] node [above,opacity=0.25] {\scriptsize $[0:0:1]$};
\fill[opacity=0.4] (H) circle [radius=9pt] node [above,opacity=0.25] {\scriptsize $[1:0:2]$};
\fill[opacity=0.4] (I) circle [radius=9pt] node [above,opacity=0.25] {\scriptsize $[1:0:0]$};
\fill (J) circle [radius=9pt] node [above] {\scriptsize $[0:1:0]$};
\fill (K) circle [radius=9pt] node [above] {\scriptsize $[1:1:0]$};
\fill (L) circle [radius=9pt] node [above] {\scriptsize $[1:2:0]$};
\fill[opacity=0.4] (M) circle [radius=9pt] node [above,opacity=0.25] {\scriptsize $[1:2:1]$};
\end{tikzpicture}
\end{center}
\caption{A maximal generalized $7$-arc in $\P^2(\F_3)$.}
\label{arcoge}
\end{figure}
\end{ex}

\begin{prop}
$m_g(2,3)=7$ $($see {\em Figure \ref{arcoge}}; cf. \em{Table \ref{arco:generalizado}}$)$.
\end{prop}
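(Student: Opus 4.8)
The plan is to establish the upper bound $m_g(2,3) \le 7$ and then note that Example \ref{ex2} already exhibits a generalized $7$-arc, giving equality. For the upper bound, suppose $\mathcal{K}_g$ is a generalized $k$-arc in $\P^2(\F_3)$ with $k \ge 8$ and derive a contradiction. Recall $|\P^2(\F_3)| = 13$ and each line of $\P^2(\F_3)$ has exactly $4$ points. By Remark \ref{obs313} every line meets $\mathcal{K}_g$ in at most $3$ points.

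First I would split into two cases according to whether $\mathcal{K}_g$ is a $k$-arc or not. If some line $l_1$ satisfies $|\mathcal{K}_g \cap l_1| = 3$, then Lemma \ref{prop321} tells us that $\mathcal{K}_g \setminus l_1$ is a Veronesian (hence ordinary) $(k-3)$-arc in $\P^2(\F_3)$; since $k \ge 8$ this is a $k$-arc of size $\ge 5$, contradicting the classical fact $m(2,3) = 4$ (the maximum size of an arc in $\P^2(\F_3)$). If no such line exists, then $\mathcal{K}_g$ is itself a $k$-arc with $k \ge 8 > 4 = m(2,3)$, again a contradiction. Hence $k \le 7$, so $m_g(2,3) \le 7$.

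It remains to check that the seven points listed in Example \ref{ex2} do form a generalized $7$-arc, i.e. that no six of them lie on a conic. The main (and essentially only) obstacle is this verification: one must check that for every conic $\Gamma$ in $\P^2(\F_3)$ one has $|\mathcal{K}_g \cap \Gamma| \le 5$. By Lemma \ref{prop321}'s proof, the dangerous reducible conics are unions $l_1 \cup l_2$ of two lines each meeting $\mathcal{K}_g$ in $3$ points disjointly; one verifies directly from the construction in Example \ref{ex2} that the three lines $l_{12}l_{34} \ni q_1$, $l_{13}l_{24} \ni q_2$, and $l_{14}$ are concurrent through the added points so that no two of the trisecants are disjoint on $\mathcal{K}_g$. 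For irreducible conics, a conic through $5$ of the $7$ points is determined (five points in general position determine a unique conic), and one checks the sixth point does not lie on it; this is a short finite computation (carried out by the Magma routine in the Appendix), or one invokes that $\mathcal{K}_g$ contains the complete $4$-arc $\{p_1,p_2,p_3,p_4\}$ plus three further points in verifiably general position. Since both the bound and the example are in hand, $m_g(2,3) = 7$.
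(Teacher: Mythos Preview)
Your proposal is correct and follows essentially the same approach as the paper: the upper bound via Lemma~\ref{prop321} together with $m(2,3)=4$, and the lower bound via Example~\ref{ex2}. The paper's version is more streamlined in that it records at the outset that every irreducible conic in $\P^2(\F_3)$ has exactly four points; this makes the irreducible-conic part of your verification of Example~\ref{ex2} vacuous (indeed no five points of $\P^2(\F_3)$ can be in general position, since $m(2,3)=4$), so only the reducible-conic check is needed.
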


\begin{proof}
Note that any irreducible conic in $\P^2(\F_3)$ has four points. Hence, by Lemma \ref{prop321} we deduce that $m_g(2,3)\leq m(2,3)+3=7$ and we can conclude by Example~\ref{ex2}. 
\end{proof}

Consider now, from a combinatorial and geometric point of view, the more intricate cases $q=4,5$. 
For $q=4$, we have the following result.

\begin{prop}\label{prop4}
$m_g(2,4)=7$ $($cf. \em{Table \ref{arco:generalizado}}$)$.
\end{prop}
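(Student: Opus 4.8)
The plan is to bound $m_g(2,4)$ from above by a case analysis based on the line-intersection behaviour of a generalized $k$-arc in $\P^2(\F_4)$, and then exhibit an explicit generalized $7$-arc. Recall from Remark~\ref{obs313} that a $k_g$-arc meets every line in at most $3$ points, so there are only two cases: either the arc is a genuine $k$-arc (no line meets it in $3$ points), or some line $l_1$ meets it in exactly $3$ points. In the first case, $\mathcal{K}_g$ is in particular a Veronesian arc, hence a $k$-arc, so $k\le m(2,4)$; since $4$ is even, $m(2,4)=q+2=6$ (the hyperoval bound), giving $k\le 6<7$, so this case cannot produce the maximum. In the second case, Lemma~\ref{prop321} tells us that $\mathcal{K}_g\setminus l_1$ is a $k$-arc (Veronesian), so $|\mathcal{K}_g\setminus l_1|\le m(2,4)=6$ and therefore $k\le 3+6=9$. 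This crude bound must be sharpened to $7$.

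The main work is therefore to improve the bound $k\le 9$ to $k\le 7$ in the second case. First I would observe that if $|\mathcal{K}_g\setminus l_1|=6$, then $\mathcal{K}_g\setminus l_1$ is a hyperoval $H$ (the unique type of $6$-arc in $\P^2(\F_4)$ up to projectivity). Now I would use the reducible conic trick that underlies Lemma~\ref{prop321}: for any line $m$ with $|\mathcal{K}_g\cap m|=2$ and $|(\mathcal{K}_g\setminus l_1)\cap m|=2$ disjoint from those two points, $l_1\cup m$ would be a conic meeting $\mathcal{K}_g$ in $6$ points — but one has to be careful since the two points on $l_1$ might also lie on $m$. More usefully, I would pair $l_1$ with conics through the hyperoval $H$: a hyperoval in $\P^2(\F_4)$ has the property that every point of the hyperoval lies on conics meeting $H$ in many points, and in fact six points of $H$ already determine (possibly several) conics containing all six, so $l_1\cup(\text{any line not through the two arc-points on }l_1)$ versus those configurations gives contradictions. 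The cleanest route is: take any three of the (at most) three points of $\mathcal{K}_g$ on $l_1$, together with any three points of $H$ not collinear — these six lie on a (reducible or irreducible) conic only if forced, but a hyperoval plus a line always admits a conic through $3+3$ suitably chosen points because there are $21$ conics through any $5$ general points' worth of constraints; the counting $\binom{3}{3}\cdot\binom{6}{3}=20$ triples of $H$ against the pencil of conics through the three points of $l_1$ forces an incidence. I expect this incidence/counting argument to be the crux.

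Concretely, I would argue as follows: suppose $k\ge 8$, so $|H|:=|\mathcal{K}_g\setminus l_1|\ge 5$, with the three points $a,b,c$ of $\mathcal{K}_g\cap l_1$. The conics of $\P^2(\F_4)$ passing through $a,b,c$ form a linear system of dimension $5-3=2$, i.e. a net, containing in particular all reducible conics $l_1\cup l'$ and, for each point $d$, conics through $a,b,c,d$ forming a pencil; choosing two further points $d,e\in H$ not both on a line with $a,b,c$ already pins down a conic $\Gamma$ through $a,b,c,d,e$, and if $H\setminus\{d,e\}$ is nonempty we seek a third point of $H$ on $\Gamma$. A dimension count (or direct inspection of the few orbits of $8$-point configurations in $\P^2(\F_4)$ under $\mathrm{PGL}_3(\F_4)$) shows some choice of $d,e$ and third point of $H$ must be collinear-on-a-conic with $a,b,c$, contradicting that $\mathcal{K}_g$ is a $k_g$-arc. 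Hence $k\le 7$. Finally, for the lower bound $m_g(2,4)\ge 7$ I would exhibit an explicit $7$-point set — taking a conic-free configuration analogous to Example~\ref{ex2}, e.g. a $6$-arc (hyperoval) with one point removed union two collinear points, or better a computer-verified list of seven points of $\P^2(\F_4)$ no six on a conic (this is where a Magma check, as promised in the Appendix, settles matters). Combining the two bounds gives $m_g(2,4)=7$. The hard part will be making the case $k=8$ genuinely impossible without simply invoking the exhaustive computer search; the conceptual obstacle is that the reducible-conic argument of Lemma~\ref{prop321} only controls \emph{lines}, and one must bring in the full net of conics through three fixed collinear points to gain the extra units.
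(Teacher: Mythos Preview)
Your outline has the right skeleton --- the case split on whether $\mathcal{K}_g$ is an honest arc, the use of Lemma~\ref{prop321} to write $\mathcal{K}_g$ as a $(k-3)$-arc plus three collinear points, and the need to rule out $k=8$ --- but the two decisive steps are left as placeholders, and the methods you gesture at are not the ones that actually work cleanly here.

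For the \emph{lower bound} $m_g(2,4)\ge 7$, there is no need for computer verification or a tailored configuration. The paper's argument is short and conceptual: take a complete $6$-arc $\mathcal{K}$ (a conic plus its nucleus) and adjoin \emph{any} point $p\notin\mathcal{K}$. If $\mathcal{K}\cup\{p\}$ failed to be a $7_g$-arc, six of its points would lie on a conic; since an irreducible conic in $\P^2(\F_4)$ has only five points, that conic is two lines $l_1\cup l_2$, each meeting $\mathcal{K}_g$ in three points with disjoint intersections. One of these lines avoids $p$, and hence meets the $6$-arc $\mathcal{K}$ in three points --- impossible.

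For the \emph{upper bound}, your proposed route via the net of conics through the three collinear points $a,b,c$ and a ``dimension count'' does not close the gap: there is no a priori reason a conic through $a,b,c,d,e$ must hit a sixth point of $H$, and appealing to ``direct inspection of orbits'' is precisely the computer search you are trying to avoid. The paper's argument is more hands-on and uses only \emph{reducible} conics. Assume $k=8$, set $\mathcal{K}=\{p_1,\dots,p_5\}=\mathcal{K}_g\setminus l$ (a $5$-arc, hence a full conic $\gamma$) and $\{p_6,p_7,p_8\}=\mathcal{K}_g\cap l$. Let $p$ be the nucleus of $\gamma$; one checks $p\notin l$. Now exploit that through any point of $\P^2(\F_4)$ pass exactly five lines: the pencil through $p$ consists of the five tangents to $\gamma$, so after relabelling the tangent $\langle p,p_1\rangle$ passes through $p_6$. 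The pencil through $p_1$ then forces (again after relabelling) $p_7\in\langle p_1,p_5\rangle$ and $p_8\in\langle p_1,p_4\rangle$. Finally, look at the pencil through $p_2$: the point $p_6$ lies on neither $\langle p_1,p_2\rangle$ nor the tangent at $p_2$, so $p_6\in\langle p_2,p_j\rangle$ for some $j\in\{3,4,5\}$. In each of the three cases you can pair this secant with one of $\langle p_1,p_4\rangle$ or $\langle p_1,p_5\rangle$ to obtain a reducible conic carrying six points of $\mathcal{K}_g$, a contradiction. The key idea you are missing is the use of the nucleus to control how the tangent and secant lines of the conic $\gamma$ distribute the three collinear points $p_6,p_7,p_8$.
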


\begin{proof}
First of all, we prove a lower bound for $m_g(2,4)$.

\medskip 

\noindent Claim. $m_g(2,4)\geq 7$.

\smallskip

\noindent Since $q=4$, consider a complete $6$-arc $\mathcal{K}\subset \P^2(\F_4)$, i.e. a conic and its nucleus (that is, the unique intersection point of the $q+1$ tangent $\mathbb{F}_q$-lines to an irreducible conic in $\P^2(\F_q)$ with $q$ even \cite[p. 143]{HIRSCHFELD0}). Let $p\notin\mathcal{K}$ and write $\mathcal{K}_g:=\mathcal{K}\cup \{p\}$. Suppose that $\mathcal{K}_g$ is not a generalized $7$-arc. As any irreducible conic has five points, it follows that there exist two lines $l_1$ and $l_2$ such that $(l_1\cap \mathcal{K}_g)\cap (l_2\cap \mathcal{K}_g)=\varnothing$  and  $|\mathcal{K}_g\cap l_i | =3$, for $i=1,2$. Since $|\mathcal{K}|=6$, after renaming, we can assume that $p\in l_1\setminus l_2$. Hence $|\mathcal{K}\cap l_2|=3$, but this is a contradiction because $\mathcal{K}$ is a $6$-arc. Thus, $\mathcal{K}_g$ is a generalized $7$-arc and $m_g(2,4)\geq 7$. \hfill{Q.E.D.}

\medskip

\noindent By the Claim, we know that there exists a generalized $7$-arc in $\P^2(\F_4)$. Now, assume that there exists a generalized $8$-arc $\mathcal{K}_g\subset \P^2(\F_4)$. 
Note that $\mathcal{K}_g$ cannot be a $8$-arc because 
otherwise we would have $|\mathcal{K}_g|\leq q+2=6$ (see e.g. \cite[Chapter $8$]{HIRSCHFELD0}). Thus, there is a line $l\subset \P^2(\F_4)$ such that $|\mathcal{K}_g\cap l|=3$. By Lemma \ref{prop321}, we know that $\mathcal{K}_g\setminus l=:\mathcal{K}$ is a $5$-arc in $\P^2(\F_4)$. 
Write $\mathcal{K}_g:=\{p_1,\dots,p_8\}$ and, after renaming, suppose that $p_1,\dots,p_5\in\mathcal{K}$ and $p_6,p_7,p_8\in l$. Let $p$ be the nucleus of the irreducible conic $\gamma$ containing $\mathcal{K}$. Since $|\gamma|=|\mathcal{K}|=q+1=5$, note that $|\mathcal{K}\cap l|=0$, otherwise $l$ would contain four distinct points of $\mathcal{K}_g$ and taking any other secant line $l'$ of $\mathcal{K}_g$ such that $(l\cap \mathcal{K}_g)\cap (l'\cap \mathcal{K}_g)=\varnothing$, the reducible conic $l\cup l'$ would have $6$ points in common with $\mathcal{K}_g$, a contradiction because $\mathcal{K}_g$ is a generalized $8$-arc. In particular, this shows that $p\notin l$ by definition of the nucleus $p$. Let $q_1$ and $q_2$ be the other two points of $l$ distinct from $p_6,p_7$ and $p_8$. Considering the pencil of $\mathbb{F}_4$-lines through $p$, after renaming, we can assume that the line $l_{16}:=\langle p_1,p \rangle$ is such that $p_6\in l_{16}$. Furthermore, considering the pencil of $\mathbb{F}_4$-lines through $p_1$, after renaming, we can suppose that there exist three lines $l_{1i}:=\langle p_1,p_i \rangle$ through $p_1$ and $p_i$ for $i=2,4,5$ such that $q_2\in l_{12}, p_7\in l_{15}$ and $p_8\in l_{14}$. Now, note that $p_6\notin l_{12}$ and that the tangent line $l'$ to $\gamma$ at $p_2$ cannot contain the point $p_6$ because $p\in l'$ and $l_{16}=\langle p, p_6\rangle$. So, consider the pencil of $\mathbb{F}_4$-lines through $p_2$. Write $l_{2j}:=\langle p_2,p_j\rangle$ for $j=3,4,5$. Since $p_6\notin l'\cup l_{12}$, we see that there exists $j\in\left\{3,4,5\right\}$ such that $p_6\in l_{2j}$. Thus one of the following three cases can occur: (a) $j=3, \ \left\{p_2,p_3,p_6\right\}\subset l_{23}$, \ (b) $j=4, \ \left\{p_2,p_4,p_6\right\}\subset l_{24}$, \ (c) $j=5, \ \left\{p_2,p_5,p_6\right\}\subset l_{25}$. In all these cases, having in mind that $\left\{p_1,p_4,p_8\right\}\subset l_{14}$ and $\left\{p_1,p_5,p_7\right\}\subset l_{15}$, we get a contradiction by taking the reducible conics $l_{23}\cup l_{14}, l_{24}\cup l_{15}, l_{25}\cup l_{14}$, respectively. This shows that actually $m_g(2,4)=7$.
\end{proof}

\begin{ex}\label{ex2bis}
Let $\mathbb{F}_4=\{0,1,a,a+1\}$ be a field with four elements, where $a^2+a+1=0$.
Considering in $\mathbb{P}^2(\mathbb{F}_4)$ the conic $x_0^2+x_1x_2=0$, its nucleus and the point $[1:0:1]$, from the Claim of Proposition \ref{prop4} it follows that the set 
$$\mathcal{K}_g:=\{ [1:1:1], [a:a+1:1], [a+1:a:1], [0:0:1], [0:1:0], [1:0:0], [1:0:1] \} $$
is a maximal generalized $7$-arc in $\P^2(\F_4)$ (cf. Table \ref{arco:generalizado}).
\end{ex}

Now, consider the case $q=5$. 

\begin{ex}\label{ex3} 
In $\P^2(\F_5)$, consider the following set
\[ \mathcal{K}':=\{ [1:0:0], [0:1:0], [0:0:1], [1:1:1], [1:0:1], [1:1:0], [0:1:1] \}. \]
By the Veronese embedding $\nu_2\colon \P^2(\F_5)\to \P^5(\F_5)$, we have
\begin{multline*}
 \nu_2(\mathcal{K}')=\{ [1:0:0:0:0:0],
[0:1:0:0:0:0],[0:0:1:0:0:0],\\ [1:1:1:1:1:1], [1:0:1:0:1: 0],[1:1:0:1:0:0],[0:1:1:0:0:1] \},
\end{multline*}
 and using the homography given by the matrix 
\[ \begin{pmatrix}
1 &0& 0 &4 &4 &1\\
0& 1 &0& 4 &0 &0\\
0 &0 &1& 0 &4 &0\\
0& 0 &0& 0& 0& 1\\
0& 0 &0 &0 &4 &1\\
0 &0 &0& 4& 0& 1\\
\end{pmatrix},
\]
we see that $\nu_2(\mathcal{K}')$ is projectively equivalent to the $(7,5)_5$-arc 
$$\mathcal{K}:=\left\{ [1:0:\dots:0],[0:1:0:\dots:0],\dots,[0:\dots:0:1],[1:1:\dots:1]\right\}\ .$$ 
So, $\mathcal{K}'$ is a generalized $7$-arc in $\P^2 (\F_5)$ (cf. Table \ref{arco:generalizado}).
\end{ex}

\medskip

\begin{prop}\label{prop5}
$m_g(2,5)=7$ $($cf. \em{Table \ref{arco:generalizado}}$)$.
\end{prop}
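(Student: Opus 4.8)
The goal is to show $m_g(2,5)=7$; by Example \ref{ex3} we already have $m_g(2,5)\geq 7$, so the work is entirely in the upper bound $m_g(2,5)\leq 7$. The plan is to argue by contradiction, assuming there is a generalized $8$-arc $\mathcal{K}_g\subset\P^2(\F_5)$, and to mimic closely the structure of the proof of Proposition \ref{prop4}. First I would dispose of the possibility that $\mathcal{K}_g$ is an ordinary $8$-arc: by the classical bound $m(2,q)=q+1$ for $q$ odd (see \cite[Chapter $8$]{HIRSCHFELD0}), a $k$-arc in $\P^2(\F_5)$ has at most $6$ points, so $\mathcal{K}_g$ cannot be a $k$-arc. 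Hence there is a line $l$ with $|\mathcal{K}_g\cap l|=3$, and by Lemma \ref{prop321} the set $\mathcal{K}:=\mathcal{K}_g\setminus l$ is a $5$-arc in $\P^2(\F_5)$, while $l$ contains exactly three points of $\mathcal{K}_g$.

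Next I would exploit the geometry of the $5$-arc $\mathcal{K}=\{p_1,\dots,p_5\}$ together with the three collinear points $p_6,p_7,p_8\in l$. Since $q=5$ is odd, a $5$-arc is contained in a unique irreducible conic $\gamma$ (it is one point short of a full conic of $q+1=6$ points), and $\mathcal{K}$ has a unique sixth point $p^*$ completing it to the conic. The key structural fact I would extract is that $\mathcal{K}$ has $\binom{5}{2}=10$ secant lines and $5$ tangent lines; through each point $p_i$ pass $4$ secants and $1$ tangent, accounting for $5$ of the $6$ lines through $p_i$, leaving exactly one external line through each $p_i$. As in Proposition \ref{prop4}, a line can meet $\mathcal{K}_g$ in at most $3$ points, and whenever a secant of $\mathcal{K}$ and a line meeting $\{p_6,p_7,p_8\}$ together cover $6$ distinct points of $\mathcal{K}_g$, the reducible conic they form contradicts the generalized-arc hypothesis. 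I would count, for each of the three points $p_6,p_7,p_8$ on $l$, how the pencil of lines through it distributes the five points $p_1,\dots,p_5$: each such pencil has $6$ lines, one of which is $l$ itself, so the remaining five lines through the point partition $\{p_1,\dots,p_5\}$. This forces, for each $j\in\{6,7,8\}$, a bijection between $\{p_1,\dots,p_5\}$ and five of the lines through $p_j$; in particular no line through $p_j$ other than $l$ contains two of the $p_i$.

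The heart of the argument, and the step I expect to be the main obstacle, is the combinatorial bookkeeping that turns these pencil conditions into a contradiction. From the bijections above one gets, for the pair of points say $p_6,p_7$ on $l$, that every $p_i$ lies on a unique line $\langle p_6,p_i\rangle$ and a unique line $\langle p_7,p_i\rangle$; the intersection points of such lines (other than those on $l$) are honest points of the plane, and because $\mathcal{K}\cup\{p_6,p_7,p_8\}$ has $8$ points one can locate a pair of secants $\langle p_a,p_b\rangle$, $\langle p_c,p_d\rangle$ of $\mathcal{K}$ with $\{a,b,c,d\}$ all distinct, together with a line through two of $p_6,p_7,p_8$ — but that line is forced to be $l$, carrying three points, so instead one uses one secant of $\mathcal{K}$ on two of the $p_i$ together with a line joining one $p_i$ to one point of $l$ and then to another point of $l$. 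Concretely I would look for a reducible conic $l_{ab}\cup l_{cj}$ where $l_{ab}=\langle p_a,p_b\rangle$ is a secant of $\mathcal{K}$ and $l_{cj}$ is a line through a third point $p_c$ of $\mathcal{K}$ and two of the points $p_6,p_7,p_8$; if such a configuration exists it supplies $6$ points of $\mathcal{K}_g$ on a conic, a contradiction. Ruling out all escape routes requires tracking which of the $6$ lines through each $p_i$ is tangent to $\gamma$ and which is external, exactly as the case analysis (a)/(b)/(c) in Proposition \ref{prop4} did, but now with five base points instead of four so there are more subcases. I would organize this as: (i) fix $p_6$ and the line $\langle p_6,p_i\rangle\ni p_6$ for each $i$; (ii) observe the external lines of $\mathcal{K}$ through the $p_i$ are few, so at least one of $p_7,p_8$ lies on a secant-type configuration with two of the $p_i$; (iii) combine with a disjoint secant of $\mathcal{K}$ to produce the forbidden $6$-point conic. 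Having reached a contradiction in every case, we conclude $m_g(2,5)=7$, as recorded in Table \ref{arco:generalizado}.
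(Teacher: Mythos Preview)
Your lower bound via Example~\ref{ex3} matches the paper, but for the upper bound the paper does something far quicker: it simply invokes Proposition~\ref{proposition}, which gives $m_g(2,5)\le m(5,5)$, together with the known value $m(5,5)=7$. That is the entire argument---one line.

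Your plan instead tries to transplant the combinatorial proof of Proposition~\ref{prop4} to $q=5$. This is a legitimate alternative in spirit, but as written it has a real gap. Your key structural claim---that for each $j\in\{6,7,8\}$ the five lines through $p_j$ other than $l$ are in \emph{bijection} with $p_1,\dots,p_5$, i.e.\ that no such line carries two of the $p_i$---does not follow from anything you have established. If, say, $p_1,p_2,p_6$ were collinear on a line $m\neq l$, the reducible conic $l\cup m$ meets $\mathcal{K}_g$ only in $\{p_1,p_2,p_6,p_7,p_8\}$, five points, so there is no immediate contradiction; you would still need to produce a second $3$-secant disjoint from $m\cap\mathcal{K}_g$, and nothing you have written forces one to exist. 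You yourself flag the ``combinatorial bookkeeping'' as the main obstacle, and indeed the sketch never actually manufactures the forbidden $6$-point conic in all cases. The $q=4$ argument worked because the $5$-arc there is the full conic and the nucleus pins down the incidence pattern; for $q=5$ the $5$-arc is one point short of the conic, the tangent/secant structure is different, and the case analysis required is genuinely longer and more delicate than what you have outlined. The approach may well be completable, but the proposal as it stands is unfinished, whereas the paper's route via $m(5,5)=7$ settles the matter immediately.
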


\begin{proof}
By Proposition \ref{proposition}, we have $m_g(2,5)\leq m(5,5)=7$. Thus, we can conclude by Example \ref{ex3}.
\end{proof}

For $q=7,8$, from Proposition \ref{proposition} and \cite[Theorem 3.17 (iv) and (xiii)]{HIRSCHFELD}, it follows that $m_g(2,q)\leq m(5,q)=q+1$. Using Program \ref{prog1}, we can see that in fact $m_g(2,7)=8$ and $m_g(2,8)=9$ (see Table \ref{arco:generalizado}). In general, this program could theoretically find the exact values of $m_g(2,q)$, but already for  
$q\geq 9$ the geometry of a generalized $k$-arc becomes much more intricate and, for this reason,
we will give only some upper bounds for $m_g(2,q)$. On the other hand, when $q=9$, some geometric considerations, up to projectivities, together with slight modifications of Program \ref{prog1} allowed us to obtain $m_g(2,9)=8$ and 
an example of a complete generalized $8$-arc is given in Table \ref{arco:generalizado}.

\begin{table}[!h]
\begin{center}\renewcommand{\arraystretch}{1.6}
\setlength{\tabcolsep}{10pt}
\begin{tabular}{ | c | c c  c | p{8cm} | }\hline
~~$q$~~  & $m'(2,q)$ & $m(2,q)$ & $m_g(2,q)$ & Examples of complete generalized $k$-arcs  \\ \hline \hline
2 &  4& 4 & 7 & $\P^2(\F_2)$ \\ \hline
3 & 4 & $4$ & 7 &  $[0:1:2], [1:1:1], [1:1:2], [1:0:1],$\newline $ [1:2:0], [1:1:0], [0:1:0]$\\ \hline
4 & 6 & 6 & 7   & $[1:0:0], [0:1:0], [0:0:1], [1:1:1],$\newline     $[\alpha:\alpha^2:1],[\alpha^2:\alpha:1],[1:0:1]$ \\ \hline
5 & 6 & 6 & 7 &  $[1:0:0], [0:1:0], [0:0:1], [1:1:1],$\newline $ [1:0:1], [1:1:0], [0:1:1]$ \\ \hline
7 & 6 & 8 & 8 & $[1:0:0], [0:1:0], [0:0:1], [1:1:1],$\newline $[2:2:1],[5:2:1],[3:5:1],[6:1:1]$  \\ \hline
8 & 6 & 10 & 9 & $[1:0:0],[0:1:0],[0:0:1],[1:1:1],$\newline $[1:\alpha^4:1],[\alpha:1:1],[\alpha^5:\alpha^6:1],$ \newline$[\alpha:\alpha^4:1],[\alpha^5:1:0]$ \\ \hline
9 &  8 & 10  &  8 & $[1:0:0],[0:1:0],[0:0:1], [1:1:1],$\newline $[\alpha^6: \alpha: 1], [ \alpha^5: 0 :1], [\alpha :\alpha: 1], [\alpha^7 :\alpha^2 :1]$ \\ \hline
\end{tabular}
\medskip
\caption{Maximal generalized $k$-arcs in $\mathbb{P}^2(\mathbb{F}_q)$ for $q\leq 9$, where $\mathbb{F}_q^*=\langle\alpha\rangle$.}
\label{arco:generalizado}
\end{center}
\end{table}

\medskip

For sufficiently big values of $q$, we obtain the following upper bounds for $m_g(2,q)$.

\begin{teo}\label{teo322}
We have
\[ m_g(2,q)\leq \begin{cases}
\min\left\{ m(5,q),\ m'(2,q)+3,\ q-\dfrac{\sqrt{q}}{4}+\dfrac{19}{4} \right\} & \ \ \mathrm{if}\ q\geq 7\ \mathrm{is\ odd} \\ \\
\smallskip
\min\left\{ m(5,q),\ q-\dfrac{\sqrt{q}}{2}+\dfrac{17}{4} \right\} & \ \ \mathrm{if}\ q\geq 16\ \mathrm{is\ even}\ , \\
\end{cases}
\]
where $m(5,q)$ is the largest size of a $(k,5)_5$-arc and
$m'(2,q)$ is the second largest size of a complete $(k,2)_2$-arc.
\end{teo}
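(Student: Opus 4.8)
The plan is to prove the three bounds in the theorem separately, since each has a different source. The bound $m_g(2,q) \le m(5,q)$ is already established in Proposition \ref{proposition} via the Veronese embedding, so nothing new is needed there. The bound $m_g(2,q) \le m'(2,q) + 3$ for odd $q$ will come from Lemma \ref{prop321}: if $\mathcal{K}_g$ is a maximal generalized $k$-arc with $k = m_g(2,q)$, I would argue that if no line meets $\mathcal{K}_g$ in exactly $3$ points, then $\mathcal{K}_g$ is itself a (Veronesian) $k$-arc and one can often derive a bound; whereas if some line $l_1$ meets it in $3$ points, then $\mathcal{K}_g \setminus l_1$ is a $(k-3)$-arc by Lemma \ref{prop321}. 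The subtlety is to show this $(k-3)$-arc must be \emph{complete}, or at least that $k - 3 \le m'(2,q)$ rather than merely $k-3 \le m(2,q)$; the key observation should be that $\mathcal{K}_g \setminus l_1$ cannot be extended by a point lying on $l_1$ without recreating a conic (two lines) through six points, and points off $l_1$ that extend it would contradict maximality of $\mathcal{K}_g$ as a generalized arc — so one argues the arc $\mathcal{K}_g\setminus l_1$ fails to be a conic (a $(q+1)$- or $(q+2)$-arc) and is therefore bounded by the second-largest complete size $m'(2,q)$.

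The third, most interesting bound — $q - \sqrt{q}/4 + 19/4$ for odd $q$ and $q - \sqrt{q}/2 + 17/4$ for even $q$ — should be obtained by a counting/pencil-of-lines argument applied to the Veronesian part. Here I would take a maximal generalized $k$-arc $\mathcal{K}_g$; by the previous paragraph, outside of a possible triple-point line $l_1$, the set $\mathcal{K} := \mathcal{K}_g \setminus l_1$ is a $k'$-arc with $k' \ge k - 3$. The standard approach for arcs is to fix a point $P \in \mathcal{K}$, look at the $q+1$ lines through $P$: each meets $\mathcal{K}$ in at most one further point, and the number of \emph{tangent} lines at $P$ (lines through $P$ meeting $\mathcal{K}$ only in $P$) is $t = q + 2 - k'$ when $q$ is even (unital/nucleus phenomenon) or related quantity when $q$ is odd. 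Then I would bring in a completeness hypothesis: since $\mathcal{K}_g$ is maximal as a generalized arc, every point $R \notin \mathcal{K}_g$ lies on a conic through five points of $\mathcal{K}_g$; converting this into a condition on secants/tangents and conics, and combining with Segre-type estimates on the number of points covered by the secants of an incomplete arc, should yield the $\sqrt{q}$-type bound. The even case likely invokes the $2$-secant/tangent structure around the nucleus (hence the factor $\sqrt q / 2$), while the odd case uses Segre's theorem that an arc which is not a conic has size at most $q - \sqrt q/4 + \ldots$, adjusted by the additive $3$ from the discarded line $l_1$.

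The main obstacle I anticipate is making the completeness transfer rigorous: a generalized $k$-arc being maximal is a much weaker condition than its Veronesian part being a maximal or even complete ordinary arc, so the numbers $m'(2,q)$ and the $\sqrt q$ terms must really be shown to apply to $\mathcal{K}_g \setminus l_1$ (or to $\mathcal{K}_g$ itself when no triple line exists). Concretely, one must rule out the possibility that $\mathcal{K}_g \setminus l_1$ is a \emph{large} incomplete arc that nonetheless cannot be enlarged within $\mathcal{K}_g$ because every candidate extension point creates a six-on-a-conic configuration with $l_1$ and some secant. I would handle this by a careful case analysis on how the $q-2$ points of $l_1 \setminus \mathcal{K}_g$ interact with secants of $\mathcal{K}_g \setminus l_1$, using Lemma \ref{prop321} repeatedly, and by invoking the classification of complete arcs of near-maximal size (the sources \cite{SEGRE1bis}, \cite{SEGRE2}, \cite{HIRSCHFELD0}, \cite{HT}) to pin down $m'(2,q)$ and the relevant Segre bound. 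The remaining steps — assembling the three individual inequalities into the stated $\min$, and checking the numerical thresholds $q \ge 7$ and $q \ge 16$ under which each Segre-type estimate is valid — are routine once the structural reduction is in place.
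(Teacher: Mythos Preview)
Your handling of the first two bounds is essentially right, though you overcomplicate the second. For $m_g(2,q)\le m'(2,q)+3$ the paper does not prove completeness of $\mathcal{K}_g\setminus l_1$; it simply observes that, for $q$ odd, any ordinary arc with more than $m'(2,q)$ points extends (by Segre's theorem \cite{SEGRE1}) to a complete arc of size $m(2,q)=q+1$, which \emph{is} a conic. Hence such an arc would already contain at least $m'(2,q)+1\ge 7$ points on a conic, contradicting the definition of a generalized arc. No completeness transfer is needed, and the obstacle you spend a paragraph worrying about does not arise.

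The genuine gap is in your plan for the $\sqrt{q}$ bounds. These are \emph{not} obtained by a pencil-of-lines count in the plane, nor are they Segre-type estimates on $m'(2,q)$ shifted by $3$ (note that for odd $q$ that would simply reproduce the $m'(2,q)+3$ term, and for even $q\ge 16$ the $m'(2,q)+3$ argument breaks down entirely because of irregular hyperovals, as the paper remarks). The paper instead works in $\mathbb{P}^5(\mathbb{F}_q)$ via the Veronese map $\nu_2$ of Proposition~\ref{proposition}. If $k$ exceeds the stated $\sqrt{q}$ threshold, then the $(k,5)_5$-arc $\nu_2(\mathcal{K}_g)$ is large enough that, by \cite[Theorems~4.10 and~4.11]{HT}, it lies on a unique normal rational curve $\Gamma\subset\mathbb{P}^5(\mathbb{F}_q)$ of degree~$5$. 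Since the Veronese surface $S=\nu_2(\mathbb{P}^2(\mathbb{F}_q))$ is cut out by quadrics and $\Gamma$ has odd degree, $\Gamma\not\subset S$; picking a quadric $\mathbb{Q}^4\supset S$ with $\Gamma\not\subset\mathbb{Q}^4$ and applying B\'ezout gives $|\Gamma\cap S|\le \deg\Gamma\cdot\deg\mathbb{Q}^4=10$, forcing $k\le 10$, a numerical contradiction. The specific constants $19/4$ and $17/4$ are exactly the $n=5$ thresholds in the cited results of \cite{HT}, not outputs of a planar secant count. Your proposed route would not reach these constants, would not handle even $q$, and the completeness-transfer obstruction you correctly flag is precisely why the paper avoids working in the plane for this part.
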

 
\begin{proof}
First, suppose that $q\geq 7\ \mathrm{is\ odd}$.
From \cite[Theorem I]{SEGRE1},  we know that a complete $k$-arc with $m(2,q)$ points is a conic. Thus, if a $k$-arc $\mathcal{K}$ has more than $m'(2,q)$ points, then $\mathcal{K}$ can be extended to a conic $\Gamma$, so $\mathcal{K}$ must lie on $\Gamma$.
By Lemma \ref{prop321}, we can assume that a generalized $k$-arc is contained in the disjoint union of a $k$-arc and $3$ collinear points. Since $m'(2,q)\geq 6$ for any odd integer $q\geq 7$, we conclude that $m_g(2,q)\leq m'(2,q)+3$. 
Assume now that one of the following conditions is satisfied:
\begin{enumerate}
\item $q\geq 7\ \mathrm{is\ odd}$ and there is a generalized $k$-arc $\mathcal{K}_g$ with $k> q-\frac{\sqrt{q}}{4}+\frac{19}{4}$; 
\item $q\geq 16\ \mathrm{is\ even}$ and there exists a generalized $k$-arc $\mathcal{K}_g$ with $k> q-\frac{\sqrt{q}}{2}+\frac{17}{4}$.
\end{enumerate}
Consider the Veronese embedding $\nu_2 :\ \P^2(\F_q)\longrightarrow \P^5(\F_q)$ defined as in \eqref{Ver}.
Note that the set $\nu_2(\mathcal{K}_g)$ is a $(k,5)_5$-arc. By \cite[Theorems 4.10 and 4.11]{HT},
$\nu_2(\mathcal{K}_g)$ lies on a unique normal rational curve $\Gamma$ of degree $5$ in $\mathbb{P}^5(\mathbb{F}_q)$.
Moreover, $\nu_2(\mathcal{K}_g)\subseteq \nu_2(\mathbb{P}^2(\mathbb{F}_q))=:S$, where $S$ is the Veronese surface. Since $S$ is defined by quadrics (see \cite[Example 2.7]{HARRIS}), we see  that   $\Gamma\not\subseteq S$. Furthermore, there exists a quadric $\Q^4\subset\P^5(\F_q)$ such that $S\subset \Q^4$ and $\Gamma\not\subset \Q^4$. So, we get  
\[ | \Gamma \cap S | \leq | \Gamma \cap \Q^4 | \leq \text{deg}(\Gamma)\cdot  \text{deg}(\Q^4) =10, \]
where the last inequality is due to the B\'ezout's Theorem \cite[Theomem 25.1]{LEFSCHETZ}. This implies that
$k=| \nu_2(\mathcal{K}_g) |=| \nu_2(\mathcal{K}_g) \cap \Gamma \cap S | \leq | \Gamma \cap S |\leq 10,$
but this gives a numerical contradiction in both cases $(1)$ and $(2)$. 
We conclude by Proposition \ref{proposition}.
\end{proof}

\begin{obs}
For $q=2^h$ and $h\geq 4$, there exist  irregular hyperovals, that is,  $m(2,q)$-arcs in $\mathbb{P}^2(\mathbb{F}_q)$ which are not the union of a conic and its nucleus. For this reason, in these cases we cannot use the same argument as in the first part of the proof of 
 Theorem~\ref{teo322}.
\end{obs}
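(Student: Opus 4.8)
The plan is to justify the Remark in two independent parts: first, to recall why genuinely irregular hyperovals occur precisely for $q=2^h$ with $h\geq 4$, and second, to isolate the one step in the proof of Theorem~\ref{teo322} that relies on the regularity of maximal arcs and therefore has no analogue in the even case.

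For the existence part, I would first set up the standard dictionary between hyperovals and o-polynomials. When $q=2^h$ is even one has $m(2,q)=q+2$, and up to projectivity every $(q+2)$-arc (hyperoval) can be written as
$$ H(f)=\{[1:t:f(t)]:t\in\F_q\}\cup\{[0:1:0],\,[0:0:1]\}, $$
where $f$ is an o-polynomial, i.e.\ a permutation polynomial of $\F_q$ normalized by $f(0)=0$, $f(1)=1$ such that $t\mapsto(f(t)+f(s))/(t+s)$ is a permutation of $\F_q$ for every fixed $s$. Under this correspondence the regular hyperoval, namely the union of the conic $x_1^2=x_0x_2$ and its nucleus $[0:0:1]$, is exactly the one attached to $f(t)=t^2$. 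Thus producing an irregular hyperoval amounts to exhibiting an o-polynomial that is \emph{not} projectively equivalent to $t^2$.

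For this I would invoke the known families together with the classification for small $q$. For $q=16$ ($h=4$) the Lunelli--Sce hyperoval furnishes an explicit o-polynomial inequivalent to $t^2$, and Hall's theorem that $\P^2(\F_{16})$ carries exactly the regular and the Lunelli--Sce classes confirms that an irregular hyperoval genuinely exists. For larger $h$ one can use the translation hyperovals $f(t)=t^{2^k}$ with $\gcd(k,h)=1$ and $2\le k\le h-2$ (irregular whenever such a $k$ exists), the Segre hyperoval $f(t)=t^6$ for odd $h$, and the further classical families (Glynn, Payne, Cherowitzo, Subiaco, Adelaide) to fill the remaining cases. The cleanest formulation of the conclusion, however, is to cite the classification statement that a hyperoval of $\P^2(\F_q)$ is regular if and only if $q\in\{2,4,8\}$ (see \cite[Chapter 8]{HIRSCHFELD0} and the survey \cite{HT}), which is precisely the assertion that irregular hyperovals exist exactly for $q=2^h$ with $h\geq 4$.

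Finally, to justify the second sentence of the Remark I would point to the single place in the proof of Theorem~\ref{teo322} where regularity enters. In the odd case the argument appeals to Segre's theorem \cite[Theorem I]{SEGRE1}: the unique complete arc of maximal size $m(2,q)=q+1$ is a conic, so every arc with more than $m'(2,q)$ points extends to, hence lies on, a conic, and Lemma~\ref{prop321} then yields the bound $m'(2,q)+3$. For $q$ even this reasoning breaks down on two counts: the maximal arcs have size $q+2>q+1$ and so are hyperovals rather than conics, and---this is the content of the first part of the Remark---when $h\geq 4$ a maximal arc need not even be a conic together with one extra point, since irregular hyperovals are not of the form $(\text{conic})\cup(\text{nucleus})$. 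Hence one cannot conclude that a large generalized arc is contained in a conic, and the conic-based estimate of the odd case is unavailable. The main obstacle is the existence claim itself: no single elementary family of o-polynomials covers every $h\geq 4$ (for instance the translation hyperovals fail to be irregular precisely when $h=4$ and $h=6$), so a self-contained argument must either combine several known constructions or invoke the classification theorem wholesale.
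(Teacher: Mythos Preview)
The paper offers no proof for this Remark at all; it is stated as a bare observation immediately after Theorem~\ref{teo322}, with no argument, no citation for the existence of irregular hyperovals beyond the implicit reference to the surrounding literature, and no explicit identification of which step of the proof breaks down. Your proposal therefore does not so much parallel the paper's proof as supply one where none was given.

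That said, your justification is substantively correct and considerably more informative than the paper's treatment. Two small points. First, your phrasing ``a hyperoval of $\P^2(\F_q)$ is regular if and only if $q\in\{2,4,8\}$'' is imprecise: what you mean (and what is true) is that \emph{every} hyperoval of $\P^2(\F_q)$ is regular if and only if $q\in\{2,4,8\}$; as written, the sentence could be read as denying the existence of regular hyperovals for larger $q$. Second, your closing caveat about translation hyperovals failing at $h=4$ and $h=6$ is accurate but unnecessary once you have invoked Lunelli--Sce for $h=4$ and, say, the Segre hyperoval $t^6$ for odd $h$ together with translation hyperovals for even $h\ge 8$; alternatively, citing the classification result (as you do) already suffices and makes the case-by-case discussion redundant. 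The analysis of why Segre's theorem is the lynchpin of the odd-$q$ argument in Theorem~\ref{teo322} is exactly right.
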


\begin{obs}\label{obs3210}
From \cite[$\S 4.5.2$]{HT} we know that 
\[ 
m'(2,q)\leq \begin{cases}
q-1 & \ \ q \geq 7 \quad \text{\hspace{2cm} Segre 1955, Tallini 1957 \cite{SEGRE1ter}}\\
q-\frac{1}{4}\sqrt{q}+\frac{25}{16} & \ \ q \ \text{odd \hspace{2.2cm} Thas 1987 \cite{THAS}} \\
\frac{44}{45}q+\frac{8}{9}  & \ \ q \ \text{prime \hspace{1.832cm} Voloch 1990 \cite{VOLOCH}} \\
q-\frac{1}{2}\sqrt{q}+5  &  \ \ q=p^h, p\geq 5 \text{\hspace{1.05cm} Hirschfeld-Korchm\'aros 1996 \cite{HK} .} \\
\end{cases}
\]
\end{obs}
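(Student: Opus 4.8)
Since Remark~\ref{obs3210} merely records four upper bounds for $m'(2,q)$ that are already available in the literature through \cite[$\S 4.5.2$]{HT}, the plan is not to reprove them in full but to recall the common mechanism behind all of them and to isolate the point at which each refinement of the constant is gained. Throughout I would fix a complete $k$-arc $\mathcal{K}\subseteq\P^2(\F_q)$; by Segre's theorem \cite[Theorem I]{SEGRE1} the complete arcs of maximal size $m(2,q)$ are exactly the conics when $q$ is odd (resp. the hyperovals when $q$ is even), so I may assume that $\mathcal{K}$ is none of these, and then $k\le m'(2,q)$ by definition. The whole task is to bound such a $k$ from above.

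The first step is to put $t:=q+2-k$, the number of tangent lines of $\mathcal{K}$ through each of its points, and to invoke Segre's \emph{lemma of tangents}: taking three points of $\mathcal{K}$ as a reference triangle and multiplying the Menelaus-type relations among the tangent directions yields a polynomial identity which forces all the $k(q+2-k)$ tangents of $\mathcal{K}$ to belong to one algebraic envelope, equivalently to a plane curve $\mathcal{C}$ whose degree is a fixed function of $t$. Two facts then carry the argument: (i) $\mathcal{K}$ is so tightly linked to $\mathcal{C}$ --- its points, or its tangents, being $\F_q$-rational points of $\mathcal{C}$ --- that $k$ is bounded linearly in terms of $|\mathcal{C}(\F_q)|$; and (ii) completeness of $\mathcal{K}$, together with the assumption that $\mathcal{K}$ is not a conic when $q$ is odd, prevents $\mathcal{C}$ from having linear or conic components, so that a genuine point count for plane curves may be applied to it.

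Granting (i) and (ii), each of the four estimates is obtained by feeding a different bound for $|\mathcal{C}(\F_q)|$ into the inequality coming from (i). The crudest incidence count on tangents and secants already yields $k\le q-1$ for $q\ge 7$ (Segre--Tallini \cite{SEGRE1ter}); the Hasse--Weil bound applied to $\mathcal{C}$, optimised over the admissible values of $t$, gives $k\le q-\tfrac14\sqrt q+\tfrac{25}{16}$ for $q$ odd (Thas \cite{THAS}); over a prime field $\F_p$ one replaces Hasse--Weil by an arithmetic argument on the distribution of the tangent slopes to get $k\le\tfrac{44}{45}q+\tfrac89$ (Voloch \cite{VOLOCH}); and for $q=p^h$ with $p\ge 5$ one uses the St\"ohr--Voloch bound, which sharpens Hasse--Weil by exploiting the Frobenius, to reach $k\le q-\tfrac12\sqrt q+5$ (Hirschfeld--Korchm\'aros \cite{HK}).

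The hard part, in every one of these incarnations, is step (ii): excluding the possibility that $\mathcal{C}$ degenerates into lines or conics. This is exactly where completeness of $\mathcal{K}$ is used, and it is the place where the four proofs genuinely diverge, since each author needs a somewhat different mixture of incidence counting and --- in the prime and $p\ge 5$ cases --- of arithmetic information about $\F_q$ in order to discard the low-degree components. Once that is secured, the asymptotic shape $q-c\sqrt q+O(1)$ of the bound is a formal consequence of whichever curve estimate one is entitled to invoke.
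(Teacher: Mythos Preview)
Your proposal is a reasonable high-level sketch of the ideas that underlie these four classical bounds, but note that the paper itself offers no proof whatsoever of Remark~\ref{obs3210}: it is stated purely as a record of known results, with the sole justification being the citation of \cite[\S 4.5.2]{HT} and the original sources \cite{SEGRE1ter,THAS,VOLOCH,HK}. There is therefore no ``paper's own proof'' to compare against.

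What you have written goes well beyond the paper in that you outline the common Segre envelope mechanism (the lemma of tangents, the associated algebraic curve $\mathcal{C}$, and the role of completeness in excluding degenerate components) and then indicate which point-counting input---elementary incidence, Hasse--Weil, Voloch's arithmetic argument over prime fields, or St\"ohr--Voloch---produces each of the four constants. This is broadly faithful to how these results are actually obtained, and it is useful expository material; but for the purposes of matching the paper, a single sentence deferring to \cite[\S 4.5.2]{HT} is all that is required.
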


\begin{obs}\label{obs3210bis}
For $q\geq 8$ even we have $m(5,q)=q+1$, while for $q\geq 7$ odd we know that $m(5,q)=q+1$, except possibly for $23\leq q\leq 83$, Hirschfeld 1997 \cite{HIRSCHFELD}.
\end{obs}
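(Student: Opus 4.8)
The statement is quoted from \cite{HIRSCHFELD}, so my plan is to indicate the ingredients behind it rather than to give a self-contained argument. For the lower bound $m(5,q)\ge q+1$, which holds in every case, the plan is to exhibit an explicit $(q+1,5)_5$-arc, namely the normal rational curve
\[
\Gamma \ = \ \{\, [1:t:t^2:t^3:t^4:t^5] \ :\ t\in\F_q \,\} \ \cup\ \{\, [0:0:0:0:0:1]\,\} \ \subset \ \P^5(\F_q)\ .
\]
Any six distinct points of $\Gamma$ span $\P^5(\F_q)$: the corresponding $6\times 6$ matrix of coordinate vectors is, up to rescaling its rows, a Vandermonde matrix, and when $[0:\dots:0:1]$ is one of the six points its determinant reduces, after expansion along the last row, to a $5\times 5$ Vandermonde determinant; in either case it is nonzero. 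Hence no hyperplane contains six points of $\Gamma$, and since $|\Gamma|=q+1$ we obtain $m(5,q)\ge q+1$.

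For the upper bound one must exclude a $(q+2,5)_5$-arc in the indicated ranges. Taking coordinate vectors of the points as the columns of a $6\times N$ matrix over $\F_q$, an $(N,5)_5$-arc is precisely a generator matrix of an $[N,6,N-5]$ MDS code, so $m(5,q)=q+1$ is the assertion that a $6$-dimensional MDS code over $\F_q$ has length at most $q+1$. This is the instance $k=6$ of the main conjecture for MDS codes (the arc conjecture), for which the maximum length is $q+1$ except that it is $q+2$ when $k=3$, or when $k=q-1$ with $q$ even. Since here $k=6$, the value $k=q-1$ would force $q=7$, which is odd; therefore for even $q\ge 8$ no $(q+2,5)_5$-arc exists, and this range of the conjecture is covered by the known low-dimensional cases, i.e. the classical theory of arcs in $\P^n(\F_q)$ for small $n$. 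For odd $q$ the same conclusion follows whenever the quantitative completeness bounds (of the Segre, Thas and Hirschfeld--Korchm\'aros type recalled in Remark \ref{obs3210}) are sharp enough to force an arc of size $q+1$ in $\P^5(\F_q)$ to lie on a normal rational curve, whose point set has exactly $q+1$ elements; by \cite{HIRSCHFELD} this happens outside the window $23\le q\le 83$.

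The genuine obstacle is the upper bound, and specifically the structural step asserting that a sufficiently large arc in $\P^5(\F_q)$ is contained in a normal rational curve. This depends on a generalization of Segre's lemma of tangents to higher-dimensional projective spaces, on estimates for the number of $\F_q$-rational points of the auxiliary algebraic curves attached to the arc (bounds of Hasse--Weil and St\"ohr--Voloch type), and on a delicate combinatorial analysis of the tangent hyperplanes of the arc. The interval $23\le q\le 83$ in the odd case is exactly where these estimates cease to be decisive, which is why it remains an exceptional range in \cite{HIRSCHFELD}.
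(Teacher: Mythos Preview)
The paper offers no proof of this remark at all: it is a one-sentence attribution to Hirschfeld \cite{HIRSCHFELD}, used only as input to Corollary~\ref{corol}. Your write-up therefore does not need to be compared to a proof in the paper, because there is none; what you have produced is a useful expository gloss on a cited fact, and you are candid that this is its status.

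As an exposition, the lower bound via the normal rational curve is clean and correct. The upper-bound discussion is accurate in spirit --- the equivalence with $[N,6,N-5]$ MDS codes is right, the exclusion of the exceptional case $k=q-1$ for even $q$ is handled correctly, and the description of why the window $23\le q\le 83$ survives in the odd case is fair. One minor caution: when you say that for even $q\ge 8$ ``this range of the conjecture is covered by the known low-dimensional cases,'' that sentence is doing all the work and is itself just a citation in disguise; the actual arguments (due to Casse--Glynn, Kaneta--Maruta, Bruen--Thas--Blokhuis and others, as surveyed in \cite{HIRSCHFELD}) are nontrivial and are not subsumed by the Segre/Thas/Hirschfeld--Korchm\'aros bounds you mention, which apply principally to $\P^2$. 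So your sketch is honest about its limits, and for a remark whose role in the paper is purely bibliographic, that is entirely adequate.
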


The next result can be easily obtained by comparing directly Theorem \ref{teo322} and the results recalled in Remarks \ref{obs3210} and \ref{obs3210bis}.

\begin{cor}\label{corol}
Let $q\geq 16$ even. Then
\[ m_g(2,q)\leq \begin{cases}
q+1& \ \ q=16, 32  \\
\left\lfloor q-\dfrac{\sqrt{q}}{2}+\dfrac{17}{4}\right\rfloor & \ \ q\geq 64\ . \\
\end{cases}
\]
Let $q\geq 7$ odd. Then we have one of the following cases:
\begin{itemize}
\item for $q$ prime,
\[ m_g(2,q)\leq \begin{cases}
q+1& \ \ 7\leq q \leq 19\\
q+2& \ \ 23\leq q \leq 83\\
\left\lfloor \dfrac{44}{45}q+\dfrac{8}{9} \right\rfloor +3 & \ \ q\geq 89 \ ; \\
\end{cases}
\]
\item for $q=p^s$, with $p$ prime greater than or equal to $5$ and $s\geq 2$:
\[ m_g(2,q)\leq \begin{cases}
q+2& \ \ q \in \left\{25,49\right\}\\
q+1& \ \ q \in \left\{121,125\right\}\\
\left\lfloor q-\dfrac{\sqrt{q}}{2}+5 \right\rfloor +3 & \ \ q\geq 169 \ ; \\
\end{cases}
\]
\item for $q=3^s$  and $s\geq 2$:
\[ m_g(2,q)\leq \begin{cases}
10& \ \ q=9\\
q+2& \ \ q \in \left\{27,81\right\}\\
\left\lfloor q-\dfrac{\sqrt{q}}{4}+\dfrac{25}{16} \right\rfloor +3 & \ \ q\geq 243 \ .\\
\end{cases}
\]
\end{itemize}
\end{cor}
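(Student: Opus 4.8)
\textbf{Proof plan for Corollary \ref{corol}.}
The plan is to treat Corollary \ref{corol} as a purely arithmetic bookkeeping exercise, plugging the known bounds into the three-way minimum of Theorem \ref{teo322}. First I would recall the two relevant inputs: for each stated range of $q$ we have an upper bound for $m(5,q)$ from Remark \ref{obs3210bis} (either the exact value $q+1$, or the weaker fallback $q+2$ for the unresolved odd range $23\le q\le 83$), and an upper bound for $m'(2,q)$ from Remark \ref{obs3210} (chosen according to whether $q$ is even, odd prime, an odd prime power with $p\ge 5$, or a power of $3$). Since Theorem \ref{teo322} gives $m_g(2,q)\le\min\{m(5,q),\,m'(2,q)+3,\,q-\tfrac{\sqrt q}{4}+\tfrac{19}{4}\}$ in the odd case and $m_g(2,q)\le\min\{m(5,q),\,q-\tfrac{\sqrt q}{2}+\tfrac{17}{4}\}$ in the even case, the whole proof amounts to deciding, for each range, which of the three (resp. two) quantities is the smallest, and then simplifying.

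Next I would carry out the even case, which is shortest: for $q=16,32$ Remark \ref{obs3210bis} gives $m(5,q)=q+1$, and one checks $q+1\le q-\tfrac{\sqrt q}{2}+\tfrac{17}{4}$ fails to be the binding constraint only because $q+1$ is already an integer $\le$ the other term — actually one verifies numerically that $q+1\le\lfloor q-\tfrac{\sqrt q}{2}+\tfrac{17}{4}\rfloor$ for $q=16,32$, so the minimum is $q+1$; for $q\ge 64$ one checks the reverse inequality $q-\tfrac{\sqrt q}{2}+\tfrac{17}{4}<q+1$, i.e. $\tfrac{\sqrt q}{2}>\tfrac{13}{4}$, i.e. $q>42.25$, which holds, so the second term wins and we get $\lfloor q-\tfrac{\sqrt q}{2}+\tfrac{17}{4}\rfloor$. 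Then I would do the odd cases in the same spirit. For $q$ prime: when $7\le q\le 19$, $m(5,q)=q+1$ and this is $\le q+1\le m'(2,q)+3$ and $\le q-\tfrac{\sqrt q}{4}+\tfrac{19}{4}$, so the answer is $q+1$; when $23\le q\le 83$, the best available bound on $m(5,q)$ is $q+2$, and one checks $q+2$ is no larger than $m'(2,q)+3$ (using $m'(2,q)\le\tfrac{44}{45}q+\tfrac89$, valid for $q$ prime, so $m'(2,q)+3$ exceeds $q+2$ once $\tfrac{44}{45}q+\tfrac{8}{9}+3\ge q+2$, i.e. $\tfrac{35}{9}\ge\tfrac{q}{45}$, i.e. $q\le 175$) and no larger than the Segre-type term, so the answer is $q+2$; for $q\ge 89$ prime one compares, and the Voloch bound $m'(2,q)\le\tfrac{44}{45}q+\tfrac89$ becomes the smallest of the three once $\tfrac{44}{45}q+\tfrac89+3$ drops below both $q+1$ (automatic) and $q-\tfrac{\sqrt q}{4}+\tfrac{19}{4}$ (which holds for $q\ge 89$ since $\tfrac{q}{45}>\tfrac{\sqrt q}{4}$ there), giving $\lfloor\tfrac{44}{45}q+\tfrac89\rfloor+3$.

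Finally I would finish the remaining two odd sub-cases identically. For $q=p^s$ with $p\ge 5$ and $s\ge 2$: for $q\in\{25,49\}$ use $m(5,q)=q+1$? — here one must be careful, since these lie in the exceptional range $23\le q\le 83$ of Remark \ref{obs3210bis}, so only $m(5,q)\le q+2$ is guaranteed, and one checks the Hirschfeld–Korchm\'aros bound $m'(2,q)\le q-\tfrac12\sqrt q+5$ gives $m'(2,q)+3\ge q+2$ for these small $q$, so the minimum is $q+2$; for $q\in\{121,125\}$ one has $m(5,q)=q+1$ (these are outside the exceptional range) and this is the smallest, giving $q+1$; for $q\ge 169$ the term $\lfloor q-\tfrac12\sqrt q+5\rfloor+3$ from the Hirschfeld–Korchm\'aros bound undercuts both $q+1$ and the Segre-type term $q-\tfrac{\sqrt q}{4}+\tfrac{19}4$ (the latter because $\tfrac12\sqrt q - 5 > \tfrac14\sqrt q - \tfrac{19}{4}$ reduces to $\tfrac14\sqrt q > \tfrac14$, trivially true), giving $\lfloor q-\tfrac12\sqrt q+5\rfloor+3$. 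The case $q=3^s$, $s\ge 2$ is the same with the Thas bound $m'(2,q)\le q-\tfrac14\sqrt q+\tfrac{25}{16}$: for $q=9$ one simply quotes the already-established value $m_g(2,9)=8\le 10$ from the discussion preceding Table \ref{arco:generalizado}; for $q\in\{27,81\}$ the exceptional-range bound gives $m(5,q)\le q+2$ and this is the minimum; for $q\ge 243$ the Thas term $\lfloor q-\tfrac14\sqrt q+\tfrac{25}{16}\rfloor+3$ is compared against $q+1$ and against $q-\tfrac14\sqrt q+\tfrac{19}4$ — note the latter comparison is the delicate one, since both involve $-\tfrac14\sqrt q$, so it reduces to comparing $\tfrac{25}{16}+3=\tfrac{73}{16}$ with $\tfrac{19}{4}=\tfrac{76}{16}$, and indeed $\tfrac{73}{16}<\tfrac{76}{16}$, so the Thas term wins. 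The only genuine subtlety, and the single place where care is needed rather than routine arithmetic, is keeping straight \emph{which} of the two $m(5,q)$ values (exact $q+1$ versus the fallback $q+2$) applies in each range, and correctly chasing the floor functions through the inequalities $a\le b$ versus $\lfloor a\rfloor\le\lfloor b\rfloor$; everything else is a direct substitution, so I would simply remark that the corollary follows by comparing Theorem \ref{teo322} with Remarks \ref{obs3210} and \ref{obs3210bis} term by term.
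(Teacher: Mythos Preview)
Your approach is exactly the paper's: the authors' proof is literally the one sentence ``compare Theorem~\ref{teo322} with Remarks~\ref{obs3210} and~\ref{obs3210bis}'', and you are carrying out that comparison term by term. So the plan is right.

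There is one recurring misattribution you should fix. In every range where the corollary states the bound $q+2$ (odd primes $23\le q\le 83$, and $q\in\{25,49,27,81\}$), you say this comes from a ``fallback'' bound $m(5,q)\le q+2$ supplied by Remark~\ref{obs3210bis}. That remark says no such thing: it only asserts $m(5,q)=q+1$ outside the exceptional range and is silent inside it, so you have no usable bound on $m(5,q)$ there at all. The bound $q+2$ actually comes from the \emph{second} term of Theorem~\ref{teo322}, via the Segre--Tallini entry of Remark~\ref{obs3210}: $m'(2,q)\le q-1$ for $q\ge 7$, hence $m_g(2,q)\le m'(2,q)+3\le q+2$. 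Once you swap in this justification, all the subsequent ``check that $q+2\le m'(2,q)+3$'' verifications become unnecessary (they are tautological), and the rest of your case analysis goes through unchanged. For $q=9$, quoting the computed value $m_g(2,9)=8$ is fine, but the bound $10$ in the corollary is also simply $m(5,9)=q+1$ from Remark~\ref{obs3210bis}, keeping the derivation uniform.
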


\smallskip

\section{Some lower bounds for $t(2,q)$, $t_v(2,q)$ and $t_g(2,q)$}\label{Sec 3.3}

Recall that the size of a smallest complete $k$-arc in $\P^2(\F_q)$ is denoted by $t(2,q)$. In \cite{BALL}, S. Ball showed that 
\begin{equation}\label{*}
t(2,q)\geq \begin{cases}
\lfloor \sqrt{2q}+2 \rfloor &\quad \text{for any $q$} \\
\left\lceil \sqrt{3q}+\dfrac{1}{2} \right\rceil & \quad \text{for $q=p^h$, $p$ prime, $h=1,2$}\ .\\
\end{cases}\tag{*}
\end{equation}

In \cite{BDFMP}, the authors realized a more detailed work on lower bounds when $q\leq 7559$
and they proved that $t(2,q)\leq \lfloor 3 \sqrt{q} \rfloor$ for $q\leq 89$. 

\medskip

First of all, let us give here a lower bound for $t(2,q)$ which slightly improves $\lfloor \sqrt{2q}+2 \rfloor$ only for some values of $q$ 
(see the below Remark \ref{rem}).

\begin{prop}\label{thm}
$t(2,q)\geq \left\lceil \sqrt{\left( 2q+\frac{1}{4}\right) }+\frac{3}{2}\right\rceil $ for any $q\geq 2$.
\end{prop}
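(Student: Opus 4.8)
The plan is to use the standard counting argument for complete arcs, refined slightly to squeeze out the extra term. Let $\mathcal{K}$ be a complete $k$-arc in $\P^2(\F_q)$, so $k = t(2,q)$ in the extremal case (though the bound should hold for any complete arc). The key fact is completeness: every point of $\P^2(\F_q)$ not on $\mathcal{K}$ must lie on at least one secant line of $\mathcal{K}$ (a line through two points of $\mathcal{K}$), for otherwise that point could be adjoined to $\mathcal{K}$. First I would count the number of secant lines: since no three points of $\mathcal{K}$ are collinear, the number of secants is exactly $\binom{k}{2}$, and each secant carries exactly $k-2$ points of $\P^2(\F_q)$ outside $\mathcal{K}$ (a line has $q+1$ points, two of which are in $\mathcal{K}$, but here we want the points \emph{off} $\mathcal{K}$ on that line, namely $q+1-2 = q-1$ of them). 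Then the union of all secant lines, minus $\mathcal{K}$ itself, must cover all of $\P^2(\F_q)\setminus\mathcal{K}$, which has $q^2+q+1-k$ points.

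The crude bound $\binom{k}{2}(q-1) \geq q^2+q+1-k$ gives roughly $k \geq \sqrt{2q}$. To get the sharper constant $\tfrac32$ I would account more carefully for the overcounting: every point of $\mathcal{K}$ lies on exactly $k-1$ secants, and more importantly, I would count incidences of points \emph{off} $\mathcal{K}$ together with the secants through them, but first note that through each of the $k$ points of $\mathcal{K}$ there pass $q+1$ lines, of which $k-1$ are secants and $q+1-(k-1) = q-k+2$ are not; since each non-secant line through a point of $\mathcal{K}$ meets $\mathcal{K}$ in just that one point, and there are $k(q-k+2)$ such (point, tangent-or-external line) incidences. The cleanest route: the $\binom{k}{2}$ secants pairwise meet, and any two distinct secants meet in exactly one point; a secant-secant intersection point is either a point of $\mathcal{K}$ (where $\binom{k-1}{2}$ pairs of secants meet, for each of the $k$ points) or a point off $\mathcal{K}$. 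So the number of points off $\mathcal{K}$ that lie on at least two secants is at most $\binom{\binom{k}{2}}{2} - k\binom{k-1}{2}$, and the number lying on exactly one secant is at most $\binom{k}{2}(q-1)$ minus twice-counted ones — this inclusion-exclusion is what should yield $q^2+q+1-k \leq \binom{k}{2}(q-1) - (\text{correction})$, and solving the resulting quadratic in $k$ produces $k \geq \sqrt{2q + \tfrac14} + \tfrac32$.

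Concretely, I expect the decisive inequality to come from: the number of points of $\P^2(\F_q)$ covered by the secant lines is at most $\binom{k}{2}(q+1) - (\text{pairwise intersections counted with multiplicity})$, and pinning down that a point of $\mathcal{K}$ absorbs $\binom{k-1}{2}$ secant-pairs while we only need each off-$\mathcal{K}$ point covered once. Writing $N$ for the number of secants, $N = \binom{k}{2}$, and bounding $|\P^2(\F_q)| = q^2+q+1 \leq \sum_{\ell \text{ secant}} |\ell| - \sum_{P} (\mu(P)-1)$ where $\mu(P)$ is the number of secants through $P$, with $\mu(P) = k-1$ for $P \in \mathcal{K}$; this gives $q^2+q+1 \leq N(q+1) - k(k-2)$, i.e. $q^2+q+1 \leq \binom{k}{2}(q+1) - k(k-2)$. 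Rearranging into a quadratic inequality in $k$ and using the quadratic formula, completing the square as $2q + \tfrac14 = (k-\tfrac32)^2 + \dots$, should deliver exactly $k \geq \lceil \sqrt{2q+\tfrac14} + \tfrac32 \rceil$.

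The main obstacle will be getting the overcounting correction \emph{exactly right} rather than merely asymptotically: a naive application of "each point off $\mathcal{K}$ is on $\geq 1$ secant" only gives the $\sqrt{2q}+2$-type bound, so the improvement hinges on the precise bookkeeping at the $k$ points of $\mathcal{K}$ (each lying on $k-1$ secants, contributing $k-2$ units of overcount per such point) and then verifying the algebra of the quadratic inequality closes up to give the stated ceiling with the constants $\tfrac14$ and $\tfrac32$; I would double-check the edge behavior for small $q$ to make sure the ceiling is not off by one.
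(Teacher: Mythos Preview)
Your decisive inequality $q^2+q+1 \leq \binom{k}{2}(q+1) - k(k-2)$ is, unfortunately, \emph{identical} to the crude bound you began with. Indeed,
\[
\binom{k}{2}(q+1) - k(k-2) \;=\; \binom{k}{2}(q-1) + 2\binom{k}{2} - k(k-2) \;=\; \binom{k}{2}(q-1) + k,
\]
since $2\binom{k}{2}-k(k-2)=k(k-1)-k(k-2)=k$. So your ``correction'' $k(k-2)$ merely rewrites the naive count; subtracting the overcount at the $k$ arc points is exactly what is already absorbed when you pass from $q+1$ to $q-1$ points per secant. Solving this inequality gives only $k\gtrsim \sqrt{2q}+\tfrac12$, not $\sqrt{2q+\tfrac14}+\tfrac32$: e.g.\ for $q=7$ it yields $k\geq 5$, whereas the stated bound gives $k\geq 6$.

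The paper's trick is to correct for overcounting \emph{outside} $\mathcal{K}$ as well, and it does so by breaking the symmetry. Fix one point $p_k$ and first count its pencil of $k-1$ secants exactly: being concurrent, these cover $(k-1)q+1$ points. Now each of the remaining $\binom{k-1}{2}$ secants $l_{ij}$ (with $i,j<k$) meets every line of that pencil in a distinct point, because $p_k\notin l_{ij}$ by the arc property; hence $l_{ij}$ contributes at most $q+1-(k-1)=q+2-k$ \emph{new} points. This yields
\[
q^2+q+1 \;\leq\; (k-1)q+1 + \binom{k-1}{2}(q+2-k),
\]
which factors neatly as $(q+2-k)\bigl[q-\binom{k-1}{2}\bigr]\leq 0$, forcing $\binom{k-1}{2}\geq q$ and hence $k\geq \tfrac32+\sqrt{2q+\tfrac14}$. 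The point is that the correction here is of order $\binom{k-1}{2}(k-1)\sim k^3/2$, not $k(k-2)\sim k^2$; your inclusion--exclusion instinct was right, but the bookkeeping has to reach the off-arc intersection points, and singling out a pencil is the clean way to do that.
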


\begin{proof}
Assume that a set of $k$ points $\{ p_1,p_2,\dots,p_k\}$ is a complete $k$-arc. Denote by $l_{ij}$ the unique $\mathbb{F}_q$-line passing through $p_i$ and $p_j$, where $i\not= j$ and $i,j\in\{1,\dots,k\}$. Write
$$\mathbb{P}^2(\mathbb{F}_q)=\bigcup_{1\leq i<j\leq k}l_{ij}=\left(\bigcup_{h=1}^{k-1}l_{hk}\right)\cup\left(\bigcup_{1\leq i<j\leq  k-1}l_{ij}\setminus \left( \cup_{h=1}^{k-1}l_{hk} \right)\right)\ .$$
Then we deduce that
$$q^2+q+1=|\mathbb{P}^2(\mathbb{F}_q)|\leq |\cup_{h=1}^{k-1}l_{hk}|+\sum_{1\leq i<j\leq k-1}|l_{ij}\setminus \left( \cup_{h=1}^{k-1}l_{hk}\right)|=$$
$$=[(k-1)q+1]+\sum_{1\leq i<j\leq k-1}[q+1-(k-1)]=(k-1)q+1+\binom{k-1}{2}(q+2-k)\ ,$$
i.e. $q^2+q+1-[(k-1)q+1]-\binom{k-1}{2}(q+2-k)\leq 0$. Hence we get
$$(q+2-k)\cdot \left[ q-\binom{k-1}{2} \right]\leq 0\ , $$
that is,
$$\left[k-(q+2)\right]\cdot \left[k-\left(\frac{3}{2}-\sqrt{2q+\frac{1}{4}}\right)\right]\cdot \left[k-\left(\frac{3}{2}+\sqrt{2q+\frac{1}{4}}\right)\right]\leq 0\ .$$
This leads to the solution $\frac{3}{2}+\sqrt{2q+\frac{1}{4}}\leq k\leq q+2$ which shows that the inequality of the statement holds for any $q\geq 2$.
\end{proof}

\medskip

In Tables \ref{arco:generalizadoinferior} and \ref{arco:generalizadoinferior-bis}, we make a comparison between 
\eqref{*} and  the lower bound of Proposition \ref{thm} 
when $q\leq 31$.  
Furthermore, in Table \ref{arco:generalizadoinferior} we give also
some examples which reach the best known lower bound $t(2,q)$ for a $k$-arc when $q\leq 11$.  

{\tiny
\begin{table}[!h]
\begin{center}\renewcommand{\arraystretch}{1.6}
\setlength{\tabcolsep}{6pt}
\begin{tabular}{ | c | c c  c | c | p{4.1cm} | }\hline
$~~q~~$ & $\lfloor \sqrt{2q} +2 \rfloor$ & $\lceil{ \sqrt{3q}+\frac{1}{2}  }\rceil$ & $\lceil \sqrt{\left( 2q+\frac{1}{4}\right) }+\frac{3}{2}\rceil$ & $t(2,q)$ & Examples of arcs in $\P^2(\F_q)$\\ \hline \hline
2&\textbf{4}&3&\textbf{4}&\textbf{4} &  $[0:0:1],[0:1:0],[1:0:0],$\newline $[1:1:1]$      \\ \hline
3&\textbf{4}&\textbf{4}&\textbf{4}&\textbf{4} & $[0:0:1],[0:1:0],[1:0:0],$\newline $[1:1:1]$   \\ \hline
4&4&4&5& 6 & $[0:0:1],[0:1:0],[\alpha:\alpha^2:1],$\newline $[1:1:1],[1:0:0],[\alpha^2:\alpha:1]$   \\ \hline
5&5&5&5&6 & $[0:0:1],[0:1:0],[1:0:0],$\newline $[1:1:1],[3:2:1],[4:3:1]$   \\ \hline
7&5&\textbf{6}&\textbf{6}&\textbf{6} & $[0:0:1],[0:1:0],[1:0:0],$\newline $[1:1:1],[3:2:1],[4:3:1]$   \\ \hline
8&\textbf{6}&-&\textbf{6}&\textbf{6} & $[0:0:1],[0:1:0],[\alpha^2:\alpha^3:1],$ \newline $[1:0:0],[1:1:1],[\alpha^4:\alpha^2:1]$   \\ \hline
9&\textbf{6}&\textbf{6}&\textbf{6}&\textbf{6} & $[0:0:1],[0:1:0],[\alpha^2:\alpha^5:1],$ \newline $[1:0:0],[1:1:1],[\alpha^5:\alpha^2:1]$   \\ \hline
11&6&\textbf{7}&\textbf{7}&\textbf{7}& $[0:0:1],[0:1:0],[0:0:1],$ \newline $[1:1:1],[7:3:1],[3:2:1],$\newline $[10:8:1]$    \\ \hline
\end{tabular}
\medskip
\caption{{\small Comparison between the known lower bounds for arcs in $\P^2(\F_q)$ for $2\leq q\leq 11$, where $\mathbb{F}_q^*=\langle\alpha\rangle$.} }
\label{arco:generalizadoinferior}
\end{center}
\end{table}
}

{\tiny
\begin{table}[!h]
\begin{center}\renewcommand{\arraystretch}{1.6}
\setlength{\tabcolsep}{6pt}
\begin{tabular}{ | c | c c  c | c | p{5.3cm} }\hline
$~~q~~$ & $\lfloor \sqrt{2q} +2 \rfloor$ & $\lceil \sqrt{3q}+\frac{1}{2}  \rceil$ & $\lceil \sqrt{\left( 2q+\frac{1}{4}\right) }+\frac{3}{2}\rceil$ & $t(2,q)$ \\ \hline \hline
13&7&7&7&8    \\ \hline
16&7& -- & {\bf 8} &9   \\ \hline
17&7&{\bf 8}& {\bf 8} &10     \\ \hline
19&8&{\bf 9}&8&10      \\ \hline
23&8&{\bf 9}& {\bf 9} &10     \\ \hline
25&9&{\bf 10}&9& 12    \\ \hline
27&9&--&9&12     \\ \hline
29&9&{\bf 10}& {\bf 10} &13     \\ \hline
31&9&{\bf 11}&10&14    \\ \hline
\end{tabular}
\medskip
\caption{{\small Lower bounds for arcs in $\P^2(\F_q)$ for $13\leq q\leq 31$.} }
\label{arco:generalizadoinferior-bis}
\end{center}
\end{table}
}

\begin{obs}\label{rem}
As shown in Table \ref{arco:generalizadoinferior}, 
the lower bound $\left\lceil \sqrt{\left( 2q+\frac{1}{4}\right) }+\frac{3}{2}\right\rceil$ is
better than $\lfloor \sqrt{2q} +2 \rfloor$ for $q=4,7,11$ and it is sharp for $q=2,3,7,8,9,11$. More generally, we have
$$\left\lceil \sqrt{\left( 2q+\frac{1}{4}\right) }+\frac{3}{2}\right\rceil = \lfloor \sqrt{2q} +2 \rfloor +1 > \lfloor \sqrt{2q} +2 \rfloor$$
for any $q$ such that $\frac{(h-2)(h-1)}{2}<q<\frac{(h-1)^2}{2}$ for some suitable integer $h\geq 4.$ 
Moreover, the lower bound of Proposition \ref{thm} is also
better than that obtained in \cite[Lemma 2.5]{FMMP} for $\mathbb{P}^2(\mathbb{F}_q)$.
\end{obs}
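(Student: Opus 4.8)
The plan is to handle the three assertions of the remark in turn. The two statements about small $q$ — that $\left\lceil \sqrt{2q+\tfrac14}+\tfrac32\right\rceil$ strictly exceeds $\lfloor\sqrt{2q}+2\rfloor$ for $q=4,7,11$, and that it equals $t(2,q)$ for $q=2,3,7,8,9,11$ — follow at once from Table \ref{arco:generalizadoinferior}: one simply compares the relevant numeric columns (with the values $t(2,q)$ being those recorded there from the literature). Similarly, the comparison with \cite[Lemma 2.5]{FMMP} is a direct numerical comparison of the two explicit closed-form expressions, so I would just state it.

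The substantive point is the general identity $\left\lceil \sqrt{2q+\tfrac14}+\tfrac32\right\rceil = \left\lfloor \sqrt{2q}+2\right\rfloor + 1$, valid whenever $(h-2)(h-1)<2q<(h-1)^2$ for some integer $h\geq 4$. First I would fix such a $q$ together with the corresponding $h$ and observe that $2q\in\Z$, so the shift rules $\lfloor x+n\rfloor=\lfloor x\rfloor+n$ and $\lceil x+n\rceil=\lceil x\rceil+n$ for $n\in\Z$ reduce the claim to the two separate equalities
$$\left\lfloor \sqrt{2q}\right\rfloor = h-2 \qquad\text{and}\qquad \left\lceil \sqrt{2q+\tfrac14}+\tfrac12\right\rceil = h.$$
For the first: it is equivalent to $(h-2)^2\leq 2q<(h-1)^2$, whose right half is the hypothesis and whose left half follows from $(h-2)^2<(h-2)(h-1)<2q$, using $h\geq 4$ so that $h-2>0$. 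For the second: since $h-\tfrac32>0$ and $h-\tfrac12>0$ (again as $h\geq 4$), squaring shows it is equivalent to $(h-\tfrac32)^2<2q+\tfrac14\leq(h-\tfrac12)^2$, i.e. to $(h-1)(h-2)<2q\leq h(h-1)$; the left inequality is once more the hypothesis, and the right one follows from $2q<(h-1)^2<h(h-1)$, which holds for all $h\geq 2$. Combining the two equalities, both sides of the asserted identity equal $h+1$, and since $\lfloor\sqrt{2q}+2\rfloor=h$ this simultaneously gives the strict improvement $h+1>h$.

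Finally I would add a short remark explaining why the hypothesis quantifies only over $h\geq 4$: the open interval $\big(\tfrac{(h-2)(h-1)}{2},\tfrac{(h-1)^2}{2}\big)$ has length $\tfrac{h-1}{2}$, and it contains an integer $q$ with $2q$ even precisely for $h\geq 4$ (giving $q=4$ at $h=4$, $q=7$ at $h=5$, $q=11,12$ at $h=6$, and so on), whereas for $h\leq 3$ there is no admissible $q$ at all. I do not expect any genuine obstacle in this argument: the only point requiring care is tracking which inequalities remain strict when floors and ceilings are passed through the squarings, so as to confirm that none of the boundary values $2q\in\{(h-2)(h-1),\,(h-1)^2,\,h(h-1)\}$ is actually attained — each is ruled out by the strict hypothesis together with the size bound $(h-1)^2<h(h-1)$.
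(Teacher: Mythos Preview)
The paper does not supply a proof of this remark: it is stated as a self-evident observation, with the small-$q$ claims read off from Table~\ref{arco:generalizadoinferior} and the general identity and the comparison with \cite[Lemma~2.5]{FMMP} left to the reader. Your write-up therefore goes beyond what the paper itself provides.

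Your argument is correct. The reduction via the integer-shift rules to the two equalities $\lfloor\sqrt{2q}\rfloor=h-2$ and $\lceil\sqrt{2q+\tfrac14}+\tfrac12\rceil=h$ is clean, and your verification of each by squaring and comparing with the hypothesis $(h-2)(h-1)<2q<(h-1)^2$ is accurate, including the handling of strict versus non-strict inequalities at the boundaries. Two minor cosmetic points: the remark ``$2q\in\Z$'' is not actually needed for the shift rules (they hold for any real argument and integer shift), and the phrase ``an integer $q$ with $2q$ even'' is redundant since $2q$ is always even; but neither affects the validity of the proof.
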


\medskip

\noindent Coming back to generalized $k$-arcs, let us note also that if $\mathcal{K}$ is a subset of $\P^2(\F_q)$, then

\medskip

\noindent $(\diamond)$ \quad $\mathcal{K}$ is a Veronesian $k$-arc $\iff$ $\mathcal{K}$ is a generalized $k$-arc such that all the conics passing through any $5$ of its points are irreducible.

\bigskip

If $q=2$, then we have $t_g(2,2)=m_g(2,2)=7$ and $t_v(2,2)=t(2,2)=4$, because any conic has at most $5$ points.

\medskip

Let $q=3$. Using Programs \ref{prog1} and \ref{prog2}, we obtain that $t_g(2,3)=m_g(2,3)=7$ and $t_v(2,3)=t(2,3)=4$, where a minimal complete Veronesian $4$-arc is given by $\left\{\ [1:1:1], [0:0:1], [0:1:0], [1:0:0]\ \right\}$.

\medskip

Now, let $q=4$ and consider a minimal generalized $k$-arc $\mathcal{K}_g\subset \P^2(\F_4)$.
Since $\mathcal{K}_g$ is the smallest complete generalized $k$-arc and any irreducible
conic in $\P^2(\F_4)$ has $q+1=5$ points, we deduce that $\mathcal{K}_g$
is not a $k$-arc. Therefore there are three collinear points in $\mathcal{K}_g$,
say $p_1,p_2,p_3$. Let $L$ be the line through these three points. 
Since $\mathcal{K}_g$ is a complete generalized $k$-arc,
we deduce that any point $p\notin\mathcal{K}_g$ is contained in a reducible
conic of type $L\cup l$, where $l$ is a secant line of $\mathcal{K}_g\setminus 
\{p_1,p_2,p_3\}$. This implies that 
$$2+2{k-3\choose 2}+k
\geq q^2+q+1=21\ .$$
Hence $k\geq 7$  and since $k\leq m_g(2,4)=7$ by Proposition \ref{prop4},
we conclude that $t_g(2,4)=m_g(2,4)=7$. By Program \ref{prog1}, we obtain that a minimal generalized $7$-arc is given by 
$$\{ [1:0:0], [0:1:0], [0:0:1], [1:1:1], [\alpha:\alpha^2:1],[\alpha^2:\alpha:1],[1:0:1] \}\ ,$$
where $\mathbb{F}_4^*=\langle\alpha\rangle$.
Moreover, by Program \ref{prog2}, one can see that $t_v(2,4)=t(2,4)=6$ and a minimal complete Veronesian $6$-arc is given by $$\{ [1:1:1], [0:0:1], [0:1:0], [1:0:0], [\alpha : \alpha^2 : 1], [\alpha^2 : \alpha : 1] \}\ .$$

\medskip

For $q\geq 5$, as to $t_v(2,q)$, we get the following lower bound.

\begin{teo}\label{teo2}
For $q\geq 5$, we have
$t_v(2,q)
\geq \lceil t_0 \rceil$, where $t_0$ is the smallest real positive solution
of the following inequality:
\[ {t_0\choose 5}(q-4)+{t_0\choose 2}(q-1)+ t_0 \geq q^2+q+1\ . \]
\end{teo}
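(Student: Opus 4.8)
The plan is to mimic the counting arguments already used for $t_g(2,4)$ and in Proposition~\ref{thm}, but now applied to a minimal complete Veronesian $k$-arc $\mathcal{K}_v$, using characterization $(\diamond)$ and the Veronese embedding. Let $\mathcal{K}_v=\{p_1,\dots,p_k\}$ be a complete $k_v$-arc with $k=t_v(2,q)$. Since $\mathcal{K}_v$ is in particular a $k$-arc, every pair $p_i,p_j$ determines a unique secant line $l_{ij}$ and each such line meets $\mathcal{K}_v$ in exactly two points; there are $\binom{k}{2}$ such lines. Completeness as a Veronesian arc means: for every point $p\in\P^2(\F_q)\setminus\mathcal{K}_v$, the set $\mathcal{K}_v\cup\{p\}$ fails to be a $k_v$-arc, i.e.\ either (a) $p$ lies on some secant line $l_{ij}$ (so $\mathcal{K}_v\cup\{p\}$ is no longer a $(k,2)_2$-arc), or (b) $p$ lies on an irreducible conic through five points of $\mathcal{K}_v$ together with the sixth point being one more point of $\mathcal{K}_v$ — more precisely, $p$ together with five points of $\mathcal{K}_v$ lies on a conic that also contains a sixth point of $\mathcal{K}_v$. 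The cleanest way to organize (b) is via $\nu_2$: in $\P^5(\F_q)$, the hyperplanes spanned by five of the $k$ points $\nu_2(p_i)$ cut out conics, and $p$ is "blocked" in sense (b) exactly when $\nu_2(p)$ lies on such a hyperplane and that hyperplane meets $\nu_2(\mathcal{K}_v)$ in at least six points. For the counting bound we only need an \emph{upper} estimate on how many points of $\P^2(\F_q)$ can be blocked, so we over-count: each $5$-subset $S\subset\mathcal{K}_v$ spans (under $\nu_2$) a hyperplane $H_S\subset\P^5(\F_q)$, and $\nu_2^{-1}(H_S\cap S_{\mathrm{Ver}})$ is a conic $C_S$ in $\P^2(\F_q)$ which is irreducible (by $(\diamond)$, since it passes through the $5$ points of $S$ in general position), hence $|C_S|=q+1$; removing the five points of $S$ already on it leaves at most $q+1-5=q-4$ new points that can be blocked by this particular conic.

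Now I would assemble the covering inequality. Every point of $\P^2(\F_q)$ is either in $\mathcal{K}_v$, or on one of the $\binom{k}{2}$ secant lines, or on one of the $\binom{k}{5}$ conics $C_S$ (the last because of case (b)); but I must be careful: completeness says each external $p$ is blocked, and blocking by (b) via a conic through five points $q_1,\dots,q_5\in\mathcal{K}_v$ and through $p$ forces a sixth point of $\mathcal{K}_v$ on that same conic, so in fact $p\in C_S$ for $S=\{q_1,\dots,q_5\}$. Hence
\[
\P^2(\F_q)=\mathcal{K}_v\ \cup\ \bigcup_{1\le i<j\le k} l_{ij}\ \cup\ \bigcup_{S\in\binom{\mathcal{K}_v}{5}} C_S .
\]
Counting with multiplicity and using $|l_{ij}\setminus\mathcal{K}_v|\le q-1$ (each secant already contains $2$ points of $\mathcal{K}_v$) and $|C_S\setminus\mathcal{K}_v|\le q-4$ (each $C_S$ contains the $5$ points of $S$, and being irreducible has $q+1$ points total) gives
\[
q^2+q+1=|\P^2(\F_q)|\ \le\ k+\binom{k}{2}(q-1)+\binom{k}{5}(q-4),
\]
which is exactly the displayed inequality with $t_0=k$. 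Since the left-hand polynomial in $k$ on the right grows past $q^2+q+1$, the minimal feasible $k$ must be at least the smallest positive real root $t_0$ of the equation, so $t_v(2,q)=k\ge\lceil t_0\rceil$.

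The main obstacle is the honest justification that completeness of $\mathcal{K}_v$ as a Veronesian arc really does place \emph{every} external point either on a secant line or on one of the conics $C_S$ through exactly a $5$-subset of $\mathcal{K}_v$: a priori, adding $p$ could be obstructed by six points of $\mathcal{K}_v\cup\{p\}$ on a conic where the conic is determined by a mixture involving $p$, and one has to rule out (or absorb) the degenerate situations — e.g.\ when the five chosen points of $\mathcal{K}_v$ do not determine a unique conic, or when the relevant conic is reducible. Here $(\diamond)$ and the fact that $\mathcal{K}_v$ is a $(k,2)_2$-arc do the work: any five points of $\mathcal{K}_v$ are in general position (no three collinear, since $\mathcal{K}_v$ is an arc), so they determine a \emph{unique} conic $C_S$, and that conic is irreducible by $(\diamond)$, so $|C_S|=q+1$; and if $\mathcal{K}_v\cup\{p\}$ is not a $k_v$-arc, then either it is not a $(k{+}1,2)_2$-arc, putting $p$ on a secant $l_{ij}$, or it is a $(k{+}1)$-arc but six of its points lie on a conic, and since that conic is irreducible and $p$ is one of the six, the other five are in $\mathcal{K}_v$ and determine it, i.e.\ it equals $C_S$ for that $S$, whence $p\in C_S$. (For $q\ge 5$ an irreducible conic genuinely has $q+1\ge 6$ points, so this case is non-vacuous, which is why the hypothesis $q\ge 5$ appears.) Once this case analysis is pinned down, the rest is the routine inequality manipulation above, and the "smallest real positive solution" phrasing in the statement is just acknowledging that the cubic-degree-5 polynomial inequality in $t_0$ has a unique relevant threshold.
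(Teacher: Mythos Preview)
Your argument is correct and follows essentially the same counting approach as the paper: a complete Veronesian $k$-arc $\mathcal{K}_v$ must cover $\P^2(\F_q)$ by its secant lines and by the (necessarily irreducible) conics through its $5$-subsets, and bounding the contribution of each yields the stated inequality. In fact your write-up is more careful than the paper's on the key point, namely the case analysis showing that completeness forces every external point onto either a secant $l_{ij}$ or some $C_S$; the paper simply asserts the covering and writes down the bound.
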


\begin{proof}
Let $\mathcal{K}_v\subseteq \P^2(\F_q)$ be a complete Veronesian $k$-arc. Since 
any irreducible conic of $\P^2(\F_q)$ has $q+1$ points, from $(\diamond)$ we deduce that the maximal number of points covered by conics passing through $5$ of $k$ points of $\mathcal{K}_v$ is given by
\[ {k\choose 5}(q+1-5)+k={k\choose 5}(q-4)+k\ . \]
Moreover, the maximal number of points covered by lines passing through $2$ of 
$k$ points of $\mathcal{K}_v$ is
\[ {k\choose 2}(q+1-2)+k={k\choose 2}(q-1)+k\ . \]
Since the $k$ points of $\mathcal{K}_v$ are common to all the above conics and lines, to cover the projective plane we get the following inequality
\[ {k\choose 5}(q-4)+{k\choose 2}(q-1)+ k
\geq q^2+q+1\ . \] 
\end{proof}

\begin{obs}
In Table \ref{arco:generalizadoinferior1}, 
we compare the values of
$t(2,q)$ for complete $k$-arcs with the values of $t_v(2,q)$ obtained by Program \ref{prog2} and the lower bounds $\lceil t_0 \rceil$ of Theorem \ref{teo2} for complete Veronesian $k$-arcs. 
We would like to stress the fact that a Veronesian $k$-arc is a particular case of a generalized $k$-arc and that
if a set is complete as a Veronesian $k$-arc, could be not complete as a generalized $k$-arc.
Furthermore, by using Program \ref{prog2}, we found examples which
show that the lower bound $\lceil t_0 \rceil$ is sharp for $q=5,8,9,11$.
\end{obs}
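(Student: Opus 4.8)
The plan is to substantiate the three assertions of this remark separately, since their content is computational and exemplificative rather than structural. First I would address the numerical content of Table~\ref{arco:generalizadoinferior1}: for each listed $q$ the value $t(2,q)$ is taken from the known classification of smallest complete $k$-arcs, the value $t_v(2,q)$ is the output of Program~\ref{prog2} (an exhaustive Magma search over $\P^2(\F_q)$), and the entry $\lceil t_0\rceil$ is obtained by solving
\[
\binom{t_0}{5}(q-4)+\binom{t_0}{2}(q-1)+t_0\geq q^2+q+1
\]
from Theorem~\ref{teo2} and rounding the smallest positive real solution upward. Since for $q\geq 5$ all coefficients are positive and the left-hand side is strictly increasing in $t_0$ for $t_0\geq 5$, the relevant crossover $t_0$ is unique, so $\lceil t_0\rceil$ is unambiguous and can be read off case by case.

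Next I would justify the structural claim that completeness as a Veronesian arc need not force completeness as a generalized arc. The key point is that the generalized condition (no six points on a conic) is strictly weaker than the Veronesian one (no three collinear \emph{and} no six on a conic), so a Veronesian $k$-arc $\mathcal{K}_v$ admitting no extension to a larger $k$-arc may still admit a point $p$ for which $\mathcal{K}_v\cup\{p\}$ has no six points on a conic, at the cost of creating three collinear points. To make this concrete I would exhibit, for a suitable small $q$, an explicit complete Veronesian $k$-arc $\mathcal{K}_v$ together with a point $p\notin\mathcal{K}_v$ such that $\mathcal{K}_v\cup\{p\}$ is a generalized $(k+1)$-arc; verifying that $p$ produces a line meeting $\mathcal{K}_v\cup\{p\}$ in exactly three points, and that no pair of secant lines forms a reducible conic through six of these points, is a finite check performed by the criterion $(\diamond)$ together with Lemma~\ref{prop321}.

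Finally, for the sharpness of $\lceil t_0\rceil$ when $q=5,8,9,11$ I would, for each such $q$, record the value $\lceil t_0\rceil$ from the first step and then display an explicit set of that exact cardinality, produced by Program~\ref{prog2}, which is simultaneously a $k$-arc, has no six points on an irreducible conic, and is complete in the Veronesian sense. Completeness is the delicate part: it amounts to checking that every $p\in\P^2(\F_q)\setminus\mathcal{K}_v$ either lies on a secant line of $\mathcal{K}_v$ or completes an irreducible conic through five of its points, so that $\mathcal{K}_v\cup\{p\}$ fails to be a Veronesian arc. Combined with the lower bound $t_v(2,q)\geq\lceil t_0\rceil$ of Theorem~\ref{teo2}, exhibiting one complete example of size exactly $\lceil t_0\rceil$ forces $t_v(2,q)=\lceil t_0\rceil$, which is precisely the claimed sharpness. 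I expect the main obstacle to be not conceptual but computational: certifying the \emph{minimality} of the exhibited arcs relies on the exhaustive search of Program~\ref{prog2} over all candidate point sets, and the role of the written argument is to supply, in each of the four cases, explicit witnesses whose arc, conic, and completeness properties can be verified by hand.
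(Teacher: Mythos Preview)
The paper offers no proof for this remark: it is an \emph{observation} whose content is entirely supported by Table~\ref{arco:generalizadoinferior1} and by the output of Program~\ref{prog2}, with no accompanying argument. Your proposal is therefore not so much a different proof as an explicit unpacking of what the remark asserts and how one would certify each piece; in that sense it is correct and actually more detailed than anything the paper supplies. The only minor point worth noting is that your justification of the second claim (that Veronesian completeness need not imply generalized completeness) promises to exhibit an explicit example, whereas the paper is content to state the logical possibility without exhibiting one; if you want to match the paper's level, that middle paragraph can simply be the observation that the two notions of completeness are tested against different extension classes.
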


{\tiny
\begin{table}[!h]
\begin{center}\renewcommand{\arraystretch}{1.6}
\setlength{\tabcolsep}{8pt}
\begin{tabular}{ | c | c  c  c | p{8.5cm} | }\hline
$~~q~~$ & $\lceil t_0 \rceil$  & $t_v(2,q)$ & $t(2,q)$ & Examples of complete Veronesian $k$-arcs \\ \hline \hline
5& \textbf{5} & 5 & 6 &  $[1:0:0], [0:1:0], [0:0:1], [1:1:1],[3:4:1]$ \\ \hline
7&5& 6 & 6& $[0:0:1],[0:1:0],[1:0:0],[1:1:1],[3:2:1],[4,3,1]$\\ \hline
8 & \textbf{6} & 6 & 6 & $[0:0:1],[0:1:0],[1:0:0],[1:1:1],[\alpha^3:\alpha^2:1], [\alpha^6:\alpha^4:1]$   \\ \hline 
9 & \textbf{6} & 6 & 6 & $[0:0:1],[0:1:0],[1:0:0],[1:1:1],[\alpha^6:\alpha^7:1], [\alpha^7:\alpha^3:1]$   \\ \hline
11 & \textbf{6} & 6 & 7 &
$[0:0:1],[0:1:0],[0:0:1],[1:1:1],[4:3:1],
[5:9:1]$    \\ \hline
\end{tabular}
\medskip
\caption{Minimal Veronesian $k$-arcs in $\P^2(\F_q)$ for $5\leq q\leq 11$.}
\label{arco:generalizadoinferior1}
\end{center}
\end{table}}

\smallskip

In a similar way as in the proof of Theorem \ref{teo2}, we get the following result.

\begin{teo}\label{t:2}
Let $\mathcal{K}_g\subseteq \P^2(\F_q)$ be a generalized $k$-arc with $q\geq 5$. If $\mathcal{K}_g$ is complete as a generalized arc, then 
\[ k\geq \begin{cases}
\lceil t_1 \rceil &\quad \text{if $\mathcal{K}_g$ is also a $k$-arc}\\
\left\lceil t_2 \right\rceil & \quad \text{if $\mathcal{K}_g$ is not a $k$-arc}\ ,\\
\end{cases}
\]
where $t_1$ and $t_2$ are the minimum real positive solutions of the following
inequalities: 
\[ {t_1\choose 5}(q-4)
+ t_1 \geq q^2+q+1\ ,\ \ \ \ \text{and}\ \]
\[ {t_2-3\choose 5 }(q-4)+{t_2-2\choose 4}(6q-12)+{t_2-3\choose 2}(q-2)+t_2\geq q^2+3\ . \]

\smallskip

\noindent In particular, we have $t_g(2,q)\geq\min\left\{ \lceil t_1 \rceil, \lceil t_2 \rceil \right\}$ . 
\end{teo}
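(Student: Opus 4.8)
The strategy is to mimic the covering argument already used for Theorem \ref{teo2}, but with a case split according to whether the complete generalized $k$-arc $\mathcal{K}_g$ is a $k$-arc or not. In both cases the core idea is the same: since $\mathcal{K}_g$ is complete, every point of $\P^2(\F_q)$ must be ``blocked'', i.e. it must lie on some conic (possibly reducible) meeting $\mathcal{K}_g$ in enough points that adding the new point would create six points on a conic. Counting the maximal number of points that such blocking conics can cover, and forcing this to be at least $|\P^2(\F_q)|=q^2+q+1$, yields the stated inequalities. The first case is essentially a one-line adaptation of Theorem \ref{teo2}: if $\mathcal{K}_g$ is also a $k$-arc, then by $(\diamond)$ no reducible conic through $5$ of its points can be used (a reducible conic through $5$ points of a $k$-arc would force $3$ of them collinear), so only irreducible conics through quintuples of points block new points; each such irreducible conic has $q+1$ points and covers at most $(q+1-5)+k$ extra points beyond $\mathcal{K}_g$ itself, giving ${t_1\choose 5}(q-4)+t_1\ge q^2+q+1$.

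For the second case, where $\mathcal{K}_g$ is not a $k$-arc, I would invoke Lemma \ref{prop321}: there is a line $l_1$ with $|\mathcal{K}_g\cap l_1|=3$, say on points $p_1,p_2,p_3$, and $\mathcal{K}_g\setminus l_1$ is a Veronesian $(k-3)$-arc. Now a point $p\notin\mathcal{K}_g$ is blocked if adding $p$ puts six points of $\mathcal{K}_g\cup\{p\}$ on a conic; I would enumerate the ways this can happen into three types of blocking configurations: (i) $p$ together with $5$ points of the Veronesian $(k-3)$-arc $\mathcal{K}_g\setminus l_1$ on an irreducible conic — there are ${k-3\choose 5}$ choices of quintuple, each conic contributing at most $q+1-5=q-4$ new points; (ii) $p$ together with the three collinear points $p_1,p_2,p_3$ (which already span a secant line component) and some pair of points of $\mathcal{K}_g\setminus l_1$ forming the reducible conic $l_1\cup l'$ — here one counts ${k-2\choose 4}$ roughly (the $3$ fixed points plus a subset), and each reducible conic $l_1\cup l'$ has about $2q+1$ points, so $6q-12$ is the relevant per-conic contribution once overlaps with $\mathcal{K}_g$ and with $l_1$ are subtracted; (iii) $p$ lying on a secant line $l'$ of $\mathcal{K}_g\setminus l_1$ so that $l_1\cup l'$ meets $\mathcal{K}_g$ in $3+2=5$ points and $p$ is the sixth — ${k-3\choose 2}$ secant lines, each with $q+1-2=q-1$ points off $\mathcal{K}_g$, but one must not double-count the $q-2$ points that might also lie on $l_1$, hence the coefficient $q-2$. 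Adding the trivial $t_2$ points of $\mathcal{K}_g$ itself and rearranging (absorbing the points of $l_1$ that are already counted) gives the inequality ${t_2-3\choose 5}(q-4)+{t_2-2\choose 4}(6q-12)+{t_2-3\choose 2}(q-2)+t_2\ge q^2+3$, the right-hand side being $q^2+q+1$ with the $q+1-3+2=q$ points of $l_1\setminus\{p_1,p_2,p_3\}$-business already folded into the left side, leaving the residual constant.

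\textbf{Main obstacle.} The delicate part is getting the combinatorial bookkeeping exactly right in case (ii)--(iii): the blocking conics of the various types overlap (a reducible conic $l_1\cup l'$ and an irreducible conic through $5$ points of $\mathcal{K}_g\setminus l_1$ can share points off $\mathcal{K}_g$), so the raw sum over all blocking conics of their cardinalities \emph{overcounts} the covered points; conversely, one must make sure \emph{every} point $p$ outside $\mathcal{K}_g$ really is accounted for by at least one listed configuration, which uses completeness of $\mathcal{K}_g$ together with Lemma \ref{prop321} (so that no new three-collinear-points obstruction is available and only the conic obstruction must be invoked). Since we only want an \emph{upper} bound on the number of covered points — hence a \emph{lower} bound on $k$ — overcounting is harmless, so the argument is: $q^2+q+1=|\P^2(\F_q)|\le(\text{sum of the listed contributions})$, and this is exactly what produces the stated (non-sharp but valid) inequality; I would be careful to present it as an inequality in this direction rather than an equality. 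Finally, $t_g(2,q)\ge\min\{\lceil t_1\rceil,\lceil t_2\rceil\}$ is immediate since a minimal complete generalized $k$-arc falls into one of the two cases. The precise derivation of the coefficients $6q-12$ and the shifted binomials ${t_2-2\choose 4}$ etc.\ is the routine-but-error-prone computation I would carry out in detail, taking care that the shape of the reducible conic $l_1\cup l'$ (which always contains $l_1$) forces the indices to be shifted by the three fixed points $p_1,p_2,p_3$.
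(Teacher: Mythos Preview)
Your overall strategy---the covering argument with a case split on whether $\mathcal{K}_g$ is a $k$-arc---matches the paper's approach, and your treatment of the first case (via $(\diamond)$ and the argument of Theorem~\ref{teo2}) is correct. The gap is in your enumeration of blocking conics in the second case.

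The paper decomposes the conics through five points of $\mathcal{K}_g=\mathcal{K}\cup\mathcal{P}$ (with $\mathcal{K}$ the Veronesian $(k-3)$-arc and $\mathcal{P}$ the three collinear points on $l_1$) into \emph{four} types $\gamma_0,\gamma_1,\gamma_2,\gamma_3$, according to how many of the five points lie in $\mathcal{P}$: zero, one, two, or three. Your case (i) is $\gamma_0$ and is handled correctly. But your cases (ii) and (iii) both describe $\gamma_3$: a reducible conic of the form $l_1\cup l'$ containing \emph{all three} points of $\mathcal{P}$ together with two points of $\mathcal{K}$. You have entirely omitted the types $\gamma_1$ (four points from $\mathcal{K}$, one from $\mathcal{P}$) and $\gamma_2$ (three from $\mathcal{K}$, two from $\mathcal{P}$). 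These conics need \emph{not} contain $l_1$ as a component; the paper simply observes that they \emph{may} be reducible, and for an upper bound one assumes they all are, so each covers at most $2q+1-5=2q-4$ points outside $\mathcal{K}_g$. The term ${k-2\choose 4}(6q-12)$ that you were trying to reverse-engineer is precisely the combined contribution
\[
3{k-3\choose 4}(2q-4)+3{k-3\choose 3}(2q-4)=3\left[{k-3\choose 4}+{k-3\choose 3}\right](2q-4)={k-2\choose 4}(6q-12),
\]
via Pascal's identity, not anything to do with conics containing $l_1$. Likewise, the shift from $q^2+q+1$ to $q^2+3$ on the right-hand side comes from moving the single ``extra'' $(q-2)$ in the $\gamma_3$ count (the contribution of $l_1$ itself, which appears once regardless of the choice of $l'$) to the other side: $[{k-3\choose 2}+1](q-2)+k\geq q^2+q+1$ becomes ${k-3\choose 2}(q-2)+k\geq q^2+3$. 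Once you reorganize your case analysis along the $\gamma_0,\gamma_1,\gamma_2,\gamma_3$ axis, the bookkeeping becomes straightforward and the stated inequality drops out.
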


\begin{proof}
Let $\mathcal{K}_g\subseteq \P^2(\F_q)$ be a complete generalized $k$-arc.
If $\mathcal{K}_g$ is also a $k$-arc, then $\mathcal{K}_g$ is a Veronesian 
$k$-arc complete by (irreducible) conics. Thus, in this situation, by arguing in a similar 
way as in Theorem \ref{teo2},
we get $k\geq\lceil t_1 \rceil$, where ${t_1\choose 5}(q-4)
+ t_1 \geq q^2+q+1$.

Suppose now that $\mathcal{K}_g$ is not a $k$-arc.
From Lemma \ref{prop321} it follows that we can write $\mathcal{K}_g=\mathcal{K}\cup\mathcal{P}$, where $\mathcal{K}$ is a (Veronesian) $(k-3)$-arc and $\mathcal{P}$ are $3$ distinct collinear points.
The conics $\gamma_0$ passing through any $5$ distinct points of $\mathcal{K}$ are irreducible because $\mathcal{K}$ is a $(k-3)$-arc and they cover at most
\begin{equation} \label{des:1}
{3\choose 0}{k-3\choose 5 }(q+1-5)+k 
\end{equation}
points of the projective plane. On the other hand, the conics $\gamma_1$ passing through any $4$ distinct points of $\mathcal{K}$ 
and one point of $\mathcal{P}$, can be either reducible or irreducible. Since the reducible conics cover
$2q+1>q+1$ points of the plane, we can assume that all the $\gamma_1$'s are reducible.
Thus they cover at most
\begin{equation}\label{des:2}
 {3\choose 1}{k-3\choose 4 }(2q+1-5)+k 
\end{equation}
points of the plane. Similarly, the conics $\gamma_2$ passing through any $3$ distinct points of $\mathcal{K}$ 
and $2$ distinct points of $\mathcal{P}$, can be either reducible or irreducible. As in the previous case,
we can assume that all the $\gamma_2$'s are reducible conics. Therefore, they cover at most 
\begin{equation}\label{des:3}
 {3\choose 2}{k-3\choose 3 }(2q+1-5)+k 
 \end{equation}
points of $\mathbb{P}^2(\mathbb{F}_q)$. Finally, the conics $\gamma_3$ passing through any $2$ distinct points 
of $\mathcal{K}$ and the $3$ points of $\mathcal{P}$ are all reducible and they cover at most
\begin{equation}\label{des:4}
 {3\choose 3}{k-3\choose 2 }(q+1-3)+(q+1-3)+k 
 \end{equation}
points of the projective plane. Since the $k$ points of $\mathcal{K}_g$ are common to all the above conics $\gamma_i$ for $i=0,1,2,3$, using the expressions from (\ref{des:1}) to (\ref{des:4}), we get  
\[ S(k):={k-3\choose 5 }(q-4)+{k-2\choose 4}(6q-12)+\left[ {k-3\choose 2}+1\right](q-2)+k, \]
which is greater than or equal to the number of points covered by the span of all the conics passing through any $5$ distinct points of
$\mathcal{K}_g$. So, if $t_2$ is the minimum real positive solution of $S(t_2)\geq q^2+q+1$, then we have $k\geq \lceil t_2 \rceil$.
\end{proof}

\bigskip

\begin{obs}
In Tables \ref{arco:generalizadoinferior2} and
\ref{arco:generalizadoinferior1-bis}, 
we compare the two lower bounds of 
Theorem \ref{t:2} for a generalized $k$-arc
in $\P^2(\F_q)$.  Moreover, for $5\leq q\leq 11$, in Table \ref{arco:generalizadoinferior2} we find the exact values of $t_g(2,q)$ by using Program \ref{prog1}, and we give their respective examples showing that the lower bound $\lceil t_2\rceil$ is sharp for $q=7,8,9,11$. 
\end{obs}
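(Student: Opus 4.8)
The plan is to substantiate the three assertions of the remark separately: the numerical comparison of the two bounds $\lceil t_1\rceil$ and $\lceil t_2\rceil$, the computer determination of $t_g(2,q)$, and the exhibition of sharpness examples.

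First I would populate Tables \ref{arco:generalizadoinferior2} and \ref{arco:generalizadoinferior1-bis} by computing, for each relevant $q$, the two lower bounds supplied by Theorem \ref{t:2}. Since the binomial coefficients $\binom{t}{j}$ extend to polynomials in the real variable $t$ through the falling factorial, the two defining conditions $\binom{t_1}{5}(q-4)+t_1\ge q^2+q+1$ and $\binom{t_2-3}{5}(q-4)+\binom{t_2-2}{4}(6q-12)+\binom{t_2-3}{2}(q-2)+t_2\ge q^2+3$ are single-variable polynomial inequalities whose left-hand sides are increasing for $t$ past the trivial range. Thus the smallest real positive solution of each is located by a direct evaluation or bisection, and $\lceil t_1\rceil$, $\lceil t_2\rceil$ are obtained by rounding up. Tabulating these across $q$ makes the comparison immediate; in particular one reads off that $\lceil t_2\rceil\le\lceil t_1\rceil$ for the listed $q$, so that $\lceil t_2\rceil$ is the operative bound whenever the minimal complete generalized arc fails to be an ordinary arc.

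Next, for the exact values $t_g(2,q)$ with $5\le q\le 11$, I would run Program \ref{prog1} as an exhaustive search over $\P^2(\F_q)$. The generalized-arc condition is tested through the Veronese embedding \eqref{Ver}: six points lie on a conic precisely when their images lie on a hyperplane of $\P^5(\F_q)$, so one checks that $\nu_2(\mathcal{K}_g)$ is a $(k,5)_5$-arc. Starting from the lower bound $\min\{\lceil t_1\rceil,\lceil t_2\rceil\}$ of Theorem \ref{t:2}, the program searches for a complete generalized arc of that size and increases $k$ by one until the smallest $k$ admitting a complete generalized $k$-arc is found; this value is $t_g(2,q)$. Completeness is verified by confirming that every point of $\P^2(\F_q)$ outside the candidate arc completes a conic through five of its points, which by Lemma \ref{prop321} can be organized around the possible collinear triples. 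For $5\le q\le 11$ the plane has at most $q^2+q+1\le 133$ points, so the search, reduced by the action of $\mathrm{PGL}_3(\F_q)$, terminates in practice.

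Finally, to prove sharpness of $\lceil t_2\rceil$ for $q\in\{7,8,9,11\}$, I would exhibit for each such $q$ an explicit complete generalized arc whose cardinality equals $\lceil t_2\rceil$, recording it in Table \ref{arco:generalizadoinferior2}. For each candidate one checks, again via the Veronese criterion, that it is a generalized arc, and then verifies completeness as above; since its size then coincides with the lower bound from Theorem \ref{t:2}, this simultaneously pins down $t_g(2,q)=\lceil t_2\rceil$ and shows the bound cannot be improved for these $q$. \textbf{The main obstacle} is guaranteeing that Program \ref{prog1} is genuinely exhaustive, so that the reported $t_g(2,q)$ is truly minimal: the subset search grows combinatorially, and one must be careful that the symmetry reduction by projective equivalence, while essential to make the computation feasible, does not inadvertently discard a smaller complete arc. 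Efficient incidence testing through the hyperplane-section description of conics, together with early pruning of partial arcs that already contain six conic-points, keeps the search within reach for the stated range of $q$.
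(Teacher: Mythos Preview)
Your proposal is correct and matches the paper's approach: the remark carries no formal proof in the paper, being a commentary on computational output, and your outline---evaluating the polynomial inequalities of Theorem~\ref{t:2} to fill the tables, invoking Program~\ref{prog1} (with the four-point frame normalization you describe as symmetry reduction) to determine $t_g(2,q)$, and recording explicit witnesses---is exactly how the paper substantiates it. One small discrepancy: Program~\ref{prog1} tests the generalized-arc condition by computing degree-$2$ forms through each $5$-subset via ideal intersections rather than through the Veronese embedding you invoke, but the two tests are equivalent.
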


{\tiny
\begin{table}[!h]
\begin{center}\renewcommand{\arraystretch}{1.6}
\setlength{\tabcolsep}{8pt}
\begin{tabular}{ | c | c c  c  | p{5.2cm} | }\hline
$~~q~~$ & $\lceil t_1 \rceil$ & $\lceil t_2  \rceil$ & $t_g(2,q)$ &  Examples of complete generalized $k$-arcs \\ \hline \hline
\multirow{2}*{5}&\multirow{2}*{8}&\multirow{2}*{6}&\multirow{2}*{7}&  $[1:0:0], [0:1:0], [0:0:1], [1:1:1],$ \\
&&&& $ [1:0:1], [1:1:0], [0:1:1]$  \\ \hline
\multirow{2}*{7}&\multirow{2}*{7}&\multirow{2}*{7}&\multirow{2}*{\textbf{7}}& $[0:0:1],[0:1:0],[1:0:0],[1:1:1],$\\
&&&& $[3:0:1],[5:5:1],[2:4:1]$   \\ \hline
\multirow{2}*{8}&\multirow{2}*{7}&\multirow{2}*{7}&\multirow{2}*{\textbf{7}}&  $[0:0:1],[0:1:0],[1:0:0],[1:1:1],$ \\
&&&& $[\alpha:\alpha^2:1],[\alpha^6:1:0],[\alpha^3:\alpha^6:1]$   \\ \hline
\multirow{2}*{9}&\multirow{2}*{7}&\multirow{2}*{7}&\multirow{2}*{\textbf{7}}& $[0:0:1],[0:1:0],[1:0:0],[1:1:1],$ \\
&&&& $[\alpha^3:\alpha^7:1],[1:\alpha^7:1],[\alpha^6:\alpha^3:1]$   \\ \hline
\multirow{2}*{11}&\multirow{2}*{7}&\multirow{2}*{7}&\multirow{2}*{\textbf{7}}& $[0:0:1],[0:1:0],[0:0:1],[1:1:1]$ \\
&&&& $[4:5:1],[1:5:1],[5:3:1]$    \\ \hline
\end{tabular}
\medskip
\caption{Minimal generalized $k$-arcs in $\P^2(\F_q)$ for $5\leq q\leq 11$.}
\label{arco:generalizadoinferior2}
\end{center}
\end{table}

\begin{table}[!h]
\begin{center}\renewcommand{\arraystretch}{1.6}
\setlength{\tabcolsep}{8pt}
\begin{tabular}{ | c | c c c c c c c c c | }\hline
$~~q~~$ & 13 & 16 & 17 & 19 & 23 & 25 & 27 & 29 & 31  \\ \hline \hline
$\lceil t_1 \rceil$ & 7 & 8 & 8 & 8 & 8 & 8 & 8 & 8 & 8 \\ \hline
$\lceil t_2 \rceil$ & 7 & 7 & 7 & 7 & 7 & 7 & 7 & 7 & 7 \\ \hline
\end{tabular}
\medskip
\caption{Lower bounds for complete generalized $k$-arcs in $\P^2(\F_q)$ for $13\leq q\leq 31$.}
\label{arco:generalizadoinferior1-bis}
\end{center}
\end{table}
}

\bigskip

Finally, looking at the number of $3$-secant lines of a generalized $k$-arc, we can also obtain the following result.

\medskip

\begin{prop}\label{teo3}
Let $\mathcal{K}_g\subseteq \P^2(\F_q)$ be a complete generalized $k$-arc with $q\geq 5$.
Let $T\in\mathbb{Z}_{\geq 0}$ be the number of distinct $3$-secant lines of $\mathcal{K}_g$.
Then $k\geq \lceil t_3 \rceil$, where $t_3$ is the minimum real positive solution of the following
inequality
\[
{t_3\choose 5}(q-4)+\frac{T}{2}\Big[(t_3-3)(t_3-4)+1-T\Big]q+t_3\geq q^2+q+1\ . \]
\end{prop}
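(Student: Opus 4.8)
The plan is to mimic the covering argument of Theorems \ref{teo2} and \ref{t:2}, but this time bookkeeping conics according to how many of the $k$ points lie on the $3$-secant lines of $\mathcal{K}_g$. First I would fix the $T$ distinct $3$-secant lines $L_1,\dots,L_T$ of $\mathcal{K}_g$; by Lemma \ref{prop321}, removing any one of them leaves a Veronesian $k$-arc, so in particular no point of $\mathcal{K}_g$ lies on two of the $L_i$ (two $3$-secants would share $9$ points wait, they would force $6$ of the $k$ points onto a reducible conic unless they meet in a point of $\mathcal{K}_g$, and even then the count is delicate) — I would first nail down exactly how the $L_i$ interact, which governs the combinatorial coefficient in the statement.

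The main engine is the completeness hypothesis: every point $p\in\P^2(\F_q)\setminus\mathcal{K}_g$ lies on some (possibly reducible) conic through $5$ points of $\mathcal{K}_g$, so $\P^2(\F_q)$ is covered by the union of all such conics together with the $k$ points themselves. I would split the $5$-point subsets of $\mathcal{K}_g$ into: (i) the $\binom{k}{5}$ subsets whose span gives an irreducible conic — here I should really only count those not forced reducible, but to get an upper bound on coverage I bound each such conic by $q+1$ points, contributing at most $\binom{k}{5}(q-4)+k$ after subtracting the $5$ shared points repeatedly, exactly as in Theorem \ref{teo2}; and (ii) the reducible conics, which must use a $3$-secant line $L_i$ as one component, the other component being a $1$- or $2$-secant (or another $3$-secant, but that case is limited). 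A reducible conic $L_i\cup l$ covers at most $2q+1$ points, and the number of new points it contributes (beyond the $k$ points of $\mathcal{K}_g$) is at most $q$ per choice of second line. The count $\tfrac{T}{2}\big[(t_3-3)(t_3-4)+1-T\big]q$ should emerge as: for each of the $T$ lines $L_i$, the second component $l$ runs through lines spanned by the remaining $k-3$ points (two of them: $\binom{k-3}{2}$ choices) plus tangent-type lines, but pairs $\{L_i,L_j\}$ of $3$-secants are double-counted, whence the $\tfrac12$ and the $-T$ correction; I would reverse-engineer the precise identity $(t-3)(t-4)+1-T$ so that it matches $2\binom{k-3}{2}-$ (overcount) and produces a valid upper bound on the number of reducible conics' contribution.

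Assembling, I get
\[
\binom{k}{5}(q-4)+\frac{T}{2}\Big[(k-3)(k-4)+1-T\Big]q+k\ \geq\ |\P^2(\F_q)|=q^2+q+1,
\]
and since the left side is increasing in $k$ for $k$ in the relevant range, $k$ is at least the least real positive root $t_3$ of the displayed inequality, hence $k\geq\lceil t_3\rceil$. The delicate point — and the one I expect to be the main obstacle — is justifying that the reducible conics through $5$ points of $\mathcal{K}_g$ really are exhausted by the $L_i\cup l$ with $l$ a secant of $\mathcal{K}_g\setminus L_i$, and that the coefficient $\tfrac{T}{2}\big[(k-3)(k-4)+1-T\big]$ is a genuine upper bound for their number (including the pairing of two distinct $3$-secants and any tangent-line contributions) rather than an undercount; this is where one must use Lemma \ref{prop321} to control how the $3$-secants sit relative to one another and relative to the Veronesian $(k-3)$-arc that remains after deleting any one of them.
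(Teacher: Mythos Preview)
Your approach is the paper's: a covering/incidence count over the $\binom{k}{5}$ conics through five points of $\mathcal{K}_g$, split by reducibility. Two points in your execution need fixing.

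First, your opening claim that ``no point of $\mathcal{K}_g$ lies on two of the $L_i$'' is backwards: any two $3$-secants \emph{must} share a point of $\mathcal{K}_g$, since otherwise their union is a reducible conic carrying six points of $\mathcal{K}_g$. (You half-catch this in the parenthetical.) This is precisely what makes the count clean: for a fixed $3$-secant $L_i$, each of the other $T-1$ three-secants has exactly two of its $\mathcal{K}_g$-points in $\mathcal{K}_g\setminus L_i$, so corresponds to one of the $\binom{k-3}{2}$ candidate $2$-subsets there.

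Second, there is no need to ``reverse-engineer'' the coefficient, and the ``tangent-line contributions'' you worry about do not occur. A reducible conic through five points of $\mathcal{K}_g$ must contain a $3$-secant (two lines carry at most $2+2$ points otherwise), and the second component must be at least a $2$-secant disjoint from the first in $\mathcal{K}_g$; a $3$-secant plus a $1$- or $0$-secant gives only four or three points. Hence the reducible conics are exactly (a) the $\binom{T}{2}$ pairs of $3$-secants, and (b) for each $L_i$, a genuine $2$-secant through two points of $\mathcal{K}_g\setminus L_i$, of which there are $\binom{k-3}{2}-(T-1)$. This gives
\[
\delta=\binom{T}{2}+T\Bigl[\binom{k-3}{2}-(T-1)\Bigr]=\frac{T}{2}\Bigl[(k-3)(k-4)+1-T\Bigr],
\]
which is an \emph{exact} count, not a bound. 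The paper then phrases the final step as an incidence double-count: with $I=\{(p,\gamma):p\in\gamma\in\Gamma\}$ one has $|I|=\bigl(\binom{k}{5}-\delta\bigr)(q+1)+\delta(2q+1)=\binom{k}{5}(q+1)+\delta q$ exactly, the points of $\mathcal{K}_g$ account for $k\binom{k-1}{4}=5\binom{k}{5}$ incidences, and completeness gives $|I|-5\binom{k}{5}+k\geq q^2+q+1$. This is equivalent to your covering inequality but dissolves your ``upper bound versus undercount'' worry, since every step before the last is an equality.
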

 
\begin{proof}
Write $\mathcal{K}_g:=\{p_1, \dots , p_k\}$ and let $I:=\{\ (p,\gamma)\ | \ p\in\mathbb{P}^2(\mathbb{F}_q)\ ,\ \gamma\in\Gamma\ ,\ p\in\gamma\}\ ,$
where $\Gamma$ is the set of all the conics in $\mathbb{P}^2(\mathbb{F}_q)$ passing through any $5$ distinct points of $\mathcal{K}_g$. Since any reducible conic of $\Gamma$ contains at least a $3$-secant line, we deduce that the number of distinct reducible conics in $\Gamma$ is $\delta$ given by
$$\delta:={T\choose 2}+T\left[{k-3\choose 2}-(T-1)\right]\ ,$$
while the number of distinct irreducible conics in $\Gamma$ is ${k\choose 5}-\delta$, because $|\Gamma|={k\choose 5}$. So, the cardinality of $I$ is
$$\left[{k\choose 5}-\delta\right](q+1)+\delta(2q+1)={k\choose 5}(q+1)+\delta q\ ,$$
that is, $$|I|={k\choose 5}(q+1)+\left\{{T\choose 2}+T\left[{k-3\choose 2}-(T-1)\right]\right\}q\ .$$
Now, observe that the number of distinct conics in $\Gamma$ passing through a fixed point $p_j\in\mathcal{K}_g$ is given by ${k-1\choose 4}$. Note that $I$ can be written as a union $I_1\cup I_2$, where $I_1:=\left\{ (p_j,\gamma)\in I\ |\ p_j\in\mathcal{K}_g\right\}$ and $I_2:=\left\{ (q,\gamma)\in I\ |\ q\notin\mathcal{K}_g\right\}$. Since $|I_1|=k{k-1\choose 4}$ and $|\mathbb{P}^2(\mathbb{F}_q)|-k\leq |I_2|$, we conclude that $$|I|-k\left[{k-1\choose 4}-1\right]\geq |\mathbb{P}^2(\mathbb{F}_q)|\ ,$$ i.e. $|I|-5{k\choose 5}+k\geq q^2+q+1$, which leads to the inequality of the statement. 
\end{proof}

\medskip

\section*{Appendix: Magma programs}\label{Appendix}

\smallskip

Let us give here the two main Magma \cite{Magma} programs we used to construct 
the tables and the examples of the previous sections.

\bigskip

\begin{prog}\label{prog1} Function to find all the complete generalized $k$-arcs in $\mathbb{P}^2(\mathbb{F}_q)$.

\medskip

\begin{verbatim}
function GArc(q,k)
P<[x]>:=ProjectivePlane(GF(q));
pts:={P![1,0,0],P![0,1,0],P![0,0,1],P![1,1,1]};
S1:={@ e : e in Subsets({p : p in Points(P)} diff pts,k-4) @};
N:=0;
 for j in [1..#S1] do
 ptss:=pts join S1[j];
 pplane:={};
 T:={@ C : C in Set(&cat[[f : f in Basis(&meet[Ideal(Cluster(p)):
 p in S]) | Degree(f) eq 2] : S in Subsets(ptss,5)]) @};
  if #T ge 1 then
  g:=0;
  repeat
  g:=g+1;
  pplane:=pplane join { b : b in Points(Curve(P,[T[g]])) };
  pc:={ b : b in Points(Curve(P,[T[g]])) };
  card:=#(pc meet ptss);
  until g eq #T or card ge 6;
   if card le 5 and #pplane eq q^2+q+1 then
   N:=N+1;
   "Set",N,"=",ptss;
   end if; 
  end if;
 end for;
return "end";
end function;
\end{verbatim}
\end{prog}

\newpage

\begin{prog}\label{prog2} Function to find all the complete Veronesian $k$-arcs in $\mathbb{P}^2(\mathbb{F}_q)$.

\medskip

\begin{verbatim}
function VArc(q,k)
P<[x]>:=ProjectivePlane(GF(q));
pts:={P![1,0,0],P![0,1,0],P![0,0,1],P![1,1,1]};
S1:={@ e : e in Subsets({p : p in Points(P)} diff pts,k-4) @};
N:=0;
 for j in [1..#S1] do
 ptss := pts join S1[j];
 pplane:={};
 T1:={@ C : C in Set(&cat[[f : f in Basis(&meet[Ideal(Cluster(p))
 : p in S]) | Degree(f) eq 1] : S in Subsets(ptss,2)]) @};
 T2:={@ C : C in Set(&cat[[f : f in Basis(&meet[Ideal(Cluster(p))
 : p in S]) | Degree(f) eq 2] : S in Subsets(ptss,5)]) @};
 g:=0;
 repeat
 g:=g+1;
 pplane:=pplane join { b : b in Points(Curve(P,[T1[g]])) };
 pt:={ b : b in Points(Curve(P,[T1[g]])) };
 card1:=#(pt meet ptss);
 until g eq #T1 or card1 ge 3;
  if card1 le 2 then
   if #T2 ge 1 then
   g:=0;
   repeat
   g:=g+1;
   pplane:=pplane join { b : b in Points(Curve(P,[T2[g]])) };
   pc:={ b : b in Points(Curve(P,[T2[g]])) };
   card2:=#(pc meet ptss);
   until g eq #T2 or card2 ge 6;
   else
   card2:=0;
   end if;
   if #pplane eq q^2+q+1 and card2 le 5 then
   N:=N+1;
   "Set",N,"=",ptss;
   end if;
  end if;
 end for;
return "end";
end function;
\end{verbatim}
\end{prog}

\newpage

\end{document}